\documentclass[10pt]{amsart}
\usepackage{amsmath,amscd}
\usepackage{amsbsy}
\usepackage{amssymb}
\usepackage{amscd,amsthm}

\usepackage[all,cmtip]{xy}

\newtheorem{thm}{Theorem}\numberwithin{thm}{section}
\newtheorem{lem}[thm]{Lemma}
\newtheorem{prop}[thm]{Proposition}

\newtheorem{exam}[thm]{Example}
\newtheorem{rema}[thm]{Remark}

\newtheorem{defi}[thm]{Definition}

\newtheorem*{thm2}{Theorem}

\begin{document}
\begin{center}
\huge{Equivariant derived category of flat families}\\[1cm]
\end{center}
\begin{center}

\large{Sa$\mathrm{\check{s}}$a Novakovi$\mathrm{\acute{c}}$}\\[0,5cm]
\end{center}
{\small \textbf{Abstract}. 
We prove the existence of tilting bundles on global quotient stacks that are produced by compatible finite group actions on flat families. 

\begin{center}
\tableofcontents
\end{center}

\section{Introduction}
Geometric tilting theory started with the construction of tilting bundles on the projective space by Beilinson \cite{BE}. Later Kapranov \cite{KA}, \cite{KA1}, \cite{KA2} constructed tilting bundles for certain homogeneous spaces. Further examples can be obtained from certain blow ups and taking projective bundles \cite{CM}, \cite{CDR}, \cite{O}. A smooth projective $k$-scheme admitting a tilting object satisfies very strict conditions, namely its Grothendieck group is a free abelian group of finite rank and the Hodge diamond is concentrated on the diagonal, at least in characteristic zero \cite{BH}.

However, it is still an open problem to give a complete classification of smooth projective $k$-schemes admitting a tilting object. In the case of curves one can prove that a smooth projective algebraic curve has a tilting object if and only if the curve is a one-dimensional Brauer--Severi variety. 
But already for smooth projective algebraic surfaces there is currently no classification of surfaces admitting such a tilting object. It is conjectured that a smooth projective algebraic surface has a tilting bundle if and only if it is rational (see \cite{BSH}, \cite{H}, \cite{HP}, \cite{HP1}, \cite{HP2}, \cite{KI} and \cite{P} for results in this direction).

In the present work, we will focus on a certain type of a quotient stack and prove the existence of tilting bundles for their derived category. Several examples of stacks admitting a tilting object are known (see \cite{IU}, \cite{IU1}, \cite{KAW}, \cite{ME}, \cite{NO}, \cite{OU} and \cite{OUE}). But as in the case of schemes, one has to settle for existence criteria for stacks admitting a tilting object.

Assume $k$ is an algebraically closed field of characteristic zero. In \cite{NO1} it is proved a generalization of the main result of \cite{CDR}. It is the following theorem: 
\begin{thm2}(\cite{NO1}, Theorem 4.15)
Let $\pi\colon X\rightarrow Z$ be a flat proper morphism between smooth projective $k$-schemes and $\mathcal{E}_1,...,\mathcal{E}_n$ a set of locally free sheaves in $D^b(X)$ such that for any point $z\in Z$ the collection $\mathcal{E}^z_1=\mathcal{E}_1\otimes \mathcal{O}_{X_z},...,\mathcal{E}^z_n=\mathcal{E}_n\otimes \mathcal{O}_{X_z}$ of the restrictions to the fiber $\pi^{-1}(z)=X_z$ is a full strongly exceptional collection for $D^b(X_z)$. Suppose $\mathcal{T}$ is a tilting bundle on $Z$. Then there exists an ample sheaf $\mathcal{M}$ on $Z$ such that $\bigoplus^n_{i=1}\pi^*(\mathcal{T}\otimes \mathcal{M}^{\otimes i})\otimes \mathcal{E}_i$ is a tilting bundle on $X$.
\end{thm2}
Our goal is to give an equivariant version of this theorem and in this way to produce a tilting bundle on the quotient stack $[X/G]$. So assume $k$ is algebraically closed and of characteristic zero. We consider smooth projective $k$-schemes $X$ and $Z$ with a compatible action of a finite group $G$ and investigate the case of $G$-morphisms $\pi\colon X\rightarrow Z$ where the underlying morphism of schemes $X\rightarrow Z$ is flat and proper. We call such morphisms \emph{flat $G$-maps} for simplicity. For a smooth projective $k$-scheme $X$, denote by $\mathrm{Coh}_G(X)$ the abelian category of equivariant coherent sheaves and by $D^b_G(X)$ its bounded derived category. Furthermore, $k[G]$ denotes the regular representation of $G$. We then prove the following theorem. 

\begin{thm2}(Theorem 4.5)
Let $\pi\colon X\rightarrow Z$ be a flat $G$-map and $\mathcal{E}_1,...,\mathcal{E}_n$ a set of locally free sheaves in $D^b_G(X)$ such that, considered as a set of objects in $D^b(X)$, for any point $z\in Z$ the collection $\mathcal{E}^z_1=\mathcal{E}_1\otimes \mathcal{O}_{X_z},...,\mathcal{E}^z_n=\mathcal{E}_n\otimes \mathcal{O}_{X_z}$ of the restrictions to the fiber $\pi^{-1}(z)=X_z$ is a full strongly exceptional collection for $D^b(X_z)$. Suppose $\mathcal{T}\in \mathrm{Coh}_G(Z)$ is a tilting bundle on $Z$. There exists an equivariant ample sheaf $\mathcal{M}$ on $Z$ such that $(\bigoplus^n_{i=1}\pi^*(\mathcal{T}\otimes \mathcal{M}^{\otimes i})\otimes \mathcal{E}_i)\otimes k[G]$ is a tilting bundle on $[X/G]$.
\end{thm2}
If all indecomposable pairwise non-isomorphic direct summands of $\mathcal{T}$ and all $\mathcal{E}_i$ are invertible sheaves, we obtain a full strongly exceptional collection. 
\begin{thm2}(Theorem 4.9)
Let $\pi\colon X\rightarrow Z$ be a flat $G$-map and $\mathcal{E}_1,...,\mathcal{E}_n$ a set of invertible sheaves in $D^b_G(X)$ such that, considered as a set of objects in $D^b(X)$, for any point $z\in Z$ the collection $\mathcal{E}^z_1=\mathcal{E}_1\otimes \mathcal{O}_{X_z},...,\mathcal{E}^z_n=\mathcal{E}_n\otimes \mathcal{O}_{X_z}$ of the restrictions to the fiber $\pi^{-1}(z)=X_z$ is a full strongly exceptional collection for $D^b(X_z)$. Suppose $\mathcal{T}\in \mathrm{Coh}_G(Z)$ is a tilting bundle on $Z$ whose indecomposable pairwise non-isomorphic direct summands are invertible sheaves. Then there is a full strongly exceptional collection for $D^b_G(X)$.
\end{thm2}

{\small \textbf{Conventions}. Throughout this work $k$ is an algebraically closed field of characteristic zero and all locally free sheaves are assumed to be of finite rank.

\section{Generalities on equivariant derived categories}
Let $X$ be a quasiprojective $k$-scheme and $G$ a finite group acting on $X$. 
A \emph{G-linearization}, also called an \emph{equivariant structure}, on $\mathcal{F}$ is given by isomorphisms $\lambda_g\colon \mathcal{F}\stackrel{\sim}\rightarrow g^*\mathcal{F}$ for all $g\in G$ subject to $\lambda_1=\mathrm{id}_{\mathcal{F}}$ and $\lambda_{gh}=h^*\lambda_g\circ \lambda_h$. In the present work we also call such sheaves \emph{equivariant sheaves}. Equivariant sheaves are therefore pairs $(\mathcal{F}, \lambda)$, consisting of a sheaf $\mathcal{F}$ on $X$ and a choice of an equivariant structure $\lambda$. 
\begin{rema}
\textnormal{
For a definition of linearization in the case where an arbitrary algebraic group acts on an arbitrary scheme we refer to \cite{BL}, \cite{EL1} or \cite{EL2}.}
\end{rema}  
If $(\mathcal{F},\lambda)$ and $(\mathcal{G},\mu)$ are two equivariant sheaves on $X$, the vector space $\mathrm{Hom}(\mathcal{F},\mathcal{G})$ becomes a $G$-representation via $g\cdot f:=(\mu_g)^{-1}\circ g^*f\circ \lambda_g$ for $f\colon \mathcal{F}\rightarrow \mathcal{G}$. The equivariant quasi-coherent respectively coherent sheaves together with $G$-invariant morphisms $\mathrm{Hom}_G(\mathcal{F},\mathcal{G}):=\mathrm{Hom}(\mathcal{F},\mathcal{G})^G$ form abelian categories with enough injectives (see \cite{BR}, \cite{PL}) which we denote by $\mathrm{Qcoh}_G(X)$ respectively $\mathrm{Coh}_G(X)$. We put $D_G(\mathrm{Qcoh}(X)):=D(\mathrm{Qcoh}_G(X))$ and $D^b_G(X):=D^b(\mathrm{Coh}_G(X))$. 

Let $X$ and $Y$ be quasiprojective $k$-schemes on which the finite group $G$ acts. A \emph{$G$-morphisms} between $X$ and $Y$ is given by a morphism $\phi\colon X\rightarrow Y$ such that $\phi\circ g=g\circ \phi$ for all $g\in G$. Then we have the pullback $\phi^*\colon \mathrm{Coh}_G(Y)\rightarrow \mathrm{Coh}_G(X)$ and the pushforward $\phi_*\colon \mathrm{Coh}_G(X)\rightarrow \mathrm{Coh}_G(Y)$. The functors $\phi^*$ and $\phi_*$ are adjoint; analogously for $\mathbb{L}\phi^*$ and $\mathbb{R}\phi_*$. For $(\mathcal{F},\lambda), (\mathcal{G},\mu)\in \mathrm{Coh}_G(X)$ there is a canonical equivariant structure on $\mathcal{F}\otimes\mathcal{G}$ coming from the maps $\lambda_g\otimes\mu_g$ (see \cite{BL}, Proposition 3.46).

By definition, objects of $D^b_G(X)$ are bounded complexes of equivariant coherent sheaves. It is clear that each such complex defines an equivariant structure on the corresponding object of $D^b(X)$. Now let $\mathcal{C}$ be the category of equivariant objects of $D^b(X)$, i.e. complexes $\mathcal{F}^{\bullet}$ with isomorphisms $\lambda_g\colon \mathcal{F}^{\bullet}\stackrel{\sim}\rightarrow g^*\mathcal{F}^{\bullet}$ satisfying $\lambda_{gh}=h^*\lambda_g\circ \lambda_h$. This category is in fact triangulated and it is a natural fact that $D^b_G(X)$ and $\mathcal{C}$ are equivalent (see \cite{C}, Proposition 4.5 or \cite{EL2}). 

There is also another description of the derived categories needed in the present work. Consider the global quotient stack $[X/G]$, produced by an action of a finite group $G$ on $X$ (see \cite{V}, Example 7.17). The quasi-coherent sheaves on $[X/G]$ are equivalent to equivariant quasi-coherent sheaves on $X$ (see \cite{V}, Example 7.21). Henceforth, the abelian categories $\mathrm{Qcoh}([X/G])$ and $\mathrm{Qcoh}_G(X)$ are equivalent and therefore give rise to equivalent derived categories $D_G(\mathrm{Qcoh}(X))\simeq D(\mathrm{Qcoh}([X/G]))$. For any two objects $\mathcal{F}^{\bullet}, \mathcal{G}^{\bullet}\in D_G(\mathrm{Qcoh}(X))$ we write $\mathrm{Hom}_G(\mathcal{F}^{\bullet},\mathcal{G}^{\bullet}):=\mathrm{Hom}_{D_G(\mathrm{Qcoh}(X))}(\mathcal{F}^{\bullet},\mathcal{G}^{\bullet})$.

Analogously, we get $D^b_G(X)\simeq D^b(\mathrm{Coh}([X/G]))$. 
Note that for $X=pt$, $\mathrm{Coh}([pt/G])\simeq \mathrm{Coh}_G(pt)\simeq \mathrm{Rep}_k(G)$ is the category of finite-dimensional representations. Moreover, for a finite group $G$, the functor $(-)^G\colon \mathrm{Coh}([pt/G])\rightarrow \mathrm{Coh}(pt), V\mapsto V^G$, is exact (see \cite{AOV}, Proposition 2.5). For arbitrary $\mathcal{F}^{\bullet}, \mathcal{G}^{\bullet}\in D^b_G(X)$, the finite group $G$ also acts on the vector space $\mathrm{Hom}(\mathcal{F}^{\bullet},\mathcal{G}^{\bullet}):=\mathrm{Hom}_{D^b(X)}(\mathcal{F}^{\bullet},\mathcal{G}^{\bullet})$. The exactness of $(-)^G$ yields
\begin{center}
$\mathrm{Hom}_G(\mathcal{F}^{\bullet},\mathcal{G}^{\bullet})\simeq \mathrm{Hom}(\mathcal{F}^{\bullet},\mathcal{G}^{\bullet})^G$. 
\end{center}
The exactness of $(-)^G$ also implies the following fact (see \cite{BFK}, Lemma 2.2.8):
\begin{lem}
Let $X$ be smooth quasiprojective $k$-scheme and $G$ a finite group acting on $X$. For arbitrary $\mathcal{F}^{\bullet},\mathcal{G}^{\bullet}\in D^b_G(X)$ the following holds for all $i\in \mathbb{Z}$:
\begin{center}
$\mathrm{Hom}_{G}(\mathcal{F}^{\bullet},\mathcal{G}^{\bullet}[i])\simeq \mathrm{Hom}(\mathcal{F}^{\bullet},\mathcal{G}^{\bullet}[i])^G$.
\end{center}
\end{lem}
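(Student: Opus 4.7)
The plan is to reduce the statement for arbitrary $i$ to the identity $\mathrm{Hom}_G(-,-)\simeq \mathrm{Hom}(-,-)^G$ recorded in the paragraph preceding the lemma, which is the case $i=0$. Since $D^b_G(X)$ can be described either as $D^b(\mathrm{Coh}_G(X))$ or as $D^b(\mathrm{Coh}([X/G]))$, the translation functor $[i]$ acts termwise and preserves equivariance: if $(\mathcal{G}^{\bullet},\lambda)$ lies in $D^b_G(X)$, then $\mathcal{G}^{\bullet}[i]$ is again equivariant via the shifted linearization $\lambda_g[i]$, and the forgetful functor to $D^b(X)$ sends it to the ordinary shift. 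In particular, $\mathcal{G}^{\bullet}[i]$ is a bona fide object of $D^b_G(X)$ to which the $i=0$ identity may be applied.

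Having made this observation, I would apply the $i=0$ identity to the pair $(\mathcal{F}^{\bullet},\mathcal{G}^{\bullet}[i])$, obtaining
\[
\mathrm{Hom}_G(\mathcal{F}^{\bullet},\mathcal{G}^{\bullet}[i]) \simeq \mathrm{Hom}(\mathcal{F}^{\bullet},\mathcal{G}^{\bullet}[i])^G,
\]
which is precisely the assertion of the lemma. Formally, the argument is just the observation that the forgetful functor $D^b_G(X)\to D^b(X)$ is triangulated, so shifting on either side corresponds to shifting on the other.

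The main point requiring care is the compatibility of the $G$-actions: the action on $\mathrm{Hom}_{D^b(X)}(\mathcal{F}^{\bullet},\mathcal{G}^{\bullet}[i])$ induced by the shifted linearization $\lambda_g[i]$ has to agree with the natural $G$-action on the $i$-th Ext group induced by $\lambda_g$ alone. I would settle this by choosing an equivariant injective resolution $\mathcal{G}^{\bullet}\to \mathcal{I}^{\bullet}$ in $\mathrm{Coh}_G(X)$ (available by \cite{BR}, \cite{PL}) and observing that $\mathcal{I}^{\bullet}$ simultaneously computes $\mathrm{Hom}$ in $D^b(X)$; this uses the standard fact that for finite $G$ the forgetful functor $\mathrm{Coh}_G(X)\to \mathrm{Coh}(X)$ is exact and admits an exact left adjoint by averaging, hence preserves injectives. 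Both $G$-actions can then be read off termwise from $\mathrm{Hom}^{\bullet}(\mathcal{F}^{\bullet},\mathcal{I}^{\bullet})$ and are manifestly equal, so that passage to $i$-th cohomology, together with the exactness of $(-)^G$, yields the identification on the nose. No deeper obstacle is expected; the entire lemma is essentially a bookkeeping check that the $i=0$ identity is natural in the second variable under the triangulated shift.
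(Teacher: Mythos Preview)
Your reduction is correct and matches the paper's treatment: the paper does not actually give a self-contained proof of this lemma but simply records that the exactness of $(-)^G$ yields the $i=0$ identity and then states the general case with a reference to \cite{BFK}, Lemma~2.2.8. Your observation that the general case is nothing more than the $i=0$ identity applied to the pair $(\mathcal{F}^{\bullet},\mathcal{G}^{\bullet}[i])$, together with the triangulated compatibility of the forgetful functor, is exactly the content behind that citation; the extra care you take with the injective resolution and the preservation of injectives under the forgetful functor is more detail than the paper provides, but it is the right justification and no further idea is needed.
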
 
For $\mathcal{F}\in \mathrm{Coh}_G(X)$ we therefore have $H^i_G(X,\mathcal{F})\simeq H^i(X,\mathcal{F})^G$. In Section 4 we also need the Leray spectral sequence: For a $G$-morphism $f\colon X\rightarrow Y$, the spectral sequence is
\begin{eqnarray}
E^{p,q}_2=H^p_G(Y,\mathbb{R}^q f_*(\mathcal{F}^{\bullet}))\Longrightarrow H^{p+q}_G(X,\mathcal{F}^{\bullet}).
\end{eqnarray}
\section{Geometric tilting theory}
In this section we recall some facts of geometric tilting theory. We first recall the notions of generating and thick subcategories (see \cite{BV}, \cite{RO}).\\

Let $\mathcal{D}$ be a triangulated category and $\mathcal{C}$ a triangulated subcategory. The subcategory $\mathcal{C}$ is called \emph{thick} if it is closed under isomorphisms and direct summands. For a subset $A$ of objects of $\mathcal{D}$ we denote by $\langle A\rangle$ the smallest full thick subcategory of $\mathcal{D}$ containing the elements of $A$. 
Furthermore, we define $A^{\perp}$ to be the subcategory of $\mathcal{D}$ consisting of all objects $M$ such that $\mathrm{Hom}_{\mathcal{D}}(E[i],M)=0$ for all $i\in \mathbb{Z}$ and all elements $E$ of $A$. We say that $A$ \emph{generates} $\mathcal{D}$ if $A^{\perp}=0$. Now assume $D$ admits arbitrary direct sums. An object $B$ is called \emph{compact} if $\mathrm{Hom}_{\mathcal{D}}(B,-)$ commutes with direct sums. Denoting by $\mathcal{D}^c$ the subcategory of compact objects we say that $\mathcal{D}$ is \emph{compactly generated} if the objects of $\mathcal{D}^c$ generate $\mathcal{D}$. One has the following important theorem (see \cite{BV}, Theorem 2.1.2).
\begin{thm}
Let $\mathcal{D}$ be a compactly generated triangulated category. Then a set of objects $A\subset \mathcal{D}^c$ generates $\mathcal{D}$ if and only if $\langle A\rangle=\mathcal{D}^c$.  
\end{thm}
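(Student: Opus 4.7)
The plan is to prove the two implications separately, with the nontrivial direction being that $A$ generates $\mathcal{D}$ implies $\langle A\rangle=\mathcal{D}^c$; this is Neeman's argument as packaged by Bondal--Van den Bergh. For the easy direction, assume $\langle A\rangle=\mathcal{D}^c$. If $M\in A^{\perp}$, then the full subcategory of $N\in\mathcal{D}$ with $\mathrm{Hom}_{\mathcal{D}}(N[i],M)=0$ for every $i\in\mathbb{Z}$ is closed under shifts, cones and direct summands, i.e.\ is a thick subcategory. It contains $A$, hence it contains $\langle A\rangle=\mathcal{D}^c$, forcing $M\in(\mathcal{D}^c)^{\perp}=0$ by compact generation. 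Thus $A^{\perp}=0$.

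For the converse, assume $A^{\perp}=0$ and let $\mathrm{Loc}(A)$ denote the smallest full triangulated subcategory of $\mathcal{D}$ containing $A$ and closed under arbitrary small coproducts; it is automatically closed under direct summands. The same orthogonality argument, now applied to the localizing subcategory of objects $N$ with $\mathrm{Hom}_{\mathcal{D}}(N[i],M)=0$ (using that $\mathrm{Hom}_{\mathcal{D}}$ converts coproducts on the source into products), gives $\mathrm{Loc}(A)^{\perp}=A^{\perp}=0$. Brown representability, available because $\mathcal{D}$ is compactly generated, supplies a right adjoint to the inclusion $\mathrm{Loc}(A)\hookrightarrow\mathcal{D}$, and the cone of the counit at any $M\in\mathcal{D}$ lies in $\mathrm{Loc}(A)^{\perp}=0$; hence $\mathrm{Loc}(A)=\mathcal{D}$.

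The remaining substantive step is to descend compact objects from $\mathrm{Loc}(A)$ to the \emph{thick} subcategory $\langle A\rangle$. Given $X\in\mathcal{D}^c$, I would inductively build a sequence $0=X_0\to X_1\to X_2\to\cdots$ in $\langle A\rangle$ together with compatible morphisms to $X$, by attaching at step $n$ a coproduct indexed by all morphisms $a[i]\to\mathrm{cone}(X_n\to X)$ with $a\in A$ and $i\in\mathbb{Z}$, along the canonical evaluation map. By construction the cone $W$ of $\mathrm{hocolim}\,X_n\to X$ satisfies $\mathrm{Hom}_{\mathcal{D}}(a[i],W)=0$ for every $a\in A$ and $i\in\mathbb{Z}$, since compactness of the $a\in\mathcal{D}^c$ allows $\mathrm{Hom}_{\mathcal{D}}(a[i],-)$ to commute with the sequential homotopy colimit; hence $W\in A^{\perp}=0$ and $X\simeq\mathrm{hocolim}\,X_n$. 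Compactness of $X$ itself then forces $\mathrm{id}_X$ to factor through some finite stage $X_n$, exhibiting $X$ as a direct summand of $X_n\in\langle A\rangle$. The main obstacle is this last compactness-factoring step together with the careful setup of the attaching sequence so that the hocolim really computes $X$; both are where the compact-generation hypothesis is genuinely used.
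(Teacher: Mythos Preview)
First, note that the paper does not actually prove this theorem; it is quoted from Bondal--Van den Bergh \cite{BV}, Theorem 2.1.2, and the argument goes back to Neeman. Your reconstruction of the easy direction and of the identity $\mathrm{Loc}(A)=\mathcal{D}$ via Brown representability is correct.

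There is, however, a genuine gap in your descent step. You assert that the tower $X_0\to X_1\to\cdots$ lies in $\langle A\rangle$, but each $X_{n+1}$ is obtained from $X_n$ by attaching a generally \emph{infinite} coproduct $\bigoplus a[i]$ indexed by all morphisms $a[i]\to\mathrm{cone}(X_n\to X)$. Hence the $X_n$ lie only in $\mathrm{Loc}(A)$, not in the thick subcategory $\langle A\rangle$, which is closed only under finite direct sums. Knowing that the compact object $X$ is a retract of some $X_n$ therefore does not yet place $X$ in $\langle A\rangle$. The missing ingredient is a further lemma of Neeman: any morphism from a compact object to an $n$-fold iterated extension of (possibly infinite) coproducts of shifts of objects of $A$ factors through an $n$-fold extension built from \emph{finite} such coproducts; this is proved by induction on $n$, using at each stage that a map from a compact object into a coproduct factors through a finite subcoproduct. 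Applying this to the retraction $X\to X_n\to X$ then exhibits $X$ as a summand of an object genuinely in $\langle A\rangle$. Your instinct that the compactness-factoring step is the crux is right, but the substantive work happens \emph{after} reaching $X_n$, not merely in factoring $\mathrm{id}_X$ through the tower.
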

We now give the definition of tilting objects (see \cite{BH} for a definition of tilting objects in arbitrary triangulated categories).
\begin{defi}
\textnormal{Let $k$ be a field, $X$ a quasiprojective $k$-scheme and $G$ a finite group acting on $X$. An object $\mathcal{T}^{\bullet}\in D_G(\mathrm{Qcoh}(X))$ is called \emph{tilting object} on $[X/G]$ 
if the following hold:
\begin{itemize}
      \item[\bf (i)] Ext vanishing: $\mathrm{Hom}_G(\mathcal{T}^{\bullet},\mathcal{T}^{\bullet}[i])=0$ for $i\neq0$.
     \item[\bf (ii)] Generation: If $\mathcal{N}^{\bullet}\in D_G(\mathrm{Qcoh}(X))$ satisfies $\mathbb{R}\mathrm{Hom}_G(\mathcal{T}^{\bullet},\mathcal{N}^{\bullet})=0$, then $\mathcal{N}^{\bullet}=0$.
		\item[\bf (iii)] Compactness: $\mathrm{Hom}_G(\mathcal{T}^{\bullet},-)$ commutes with direct sums.
		\end{itemize}}
\end{defi}
Below we state the well-known equivariant tilting correspondence (see \cite{BR}, Theorem 3.1.1). It is a direct application of a more general result on triangulated categories (see \cite{KE}, Theorem 8.5). We denote by $\mathrm{Mod}(A)$ the category of right $A$-modules and by $D^b(A)$ the bounded derived category of finitely generated right $A$-modules. Furthermore, $\mathrm{perf}(A)\subset D(\mathrm{Mod}(A)) $ denotes the full triangulated subcategory of perfect complexes, those quasi-isomorphic to a bounded complexes of finitely generated projective right $A$-modules.
\begin{thm}
Let $X$ be a quasiprojective $k$-scheme and $G$ a finite group acting on $X$. Suppose we are given a tilting object $\mathcal{T}^{\bullet}$ on $[X/G]$ 
and let $A=\mathrm{End}_G(\mathcal{T}^{\bullet})$. Then the following hold:
\begin{itemize}
      \item[\bf (i)] The functor $\mathbb{R}\mathrm{Hom}_G(\mathcal{T}^{\bullet},-)\colon D_G(\mathrm{Qcoh}(X))\rightarrow D(\mathrm{Mod}(A))$ is an equivalence. 
      \item[\bf (ii)] If $X$ is smooth and $\mathcal{T}\in D^b_G(X)$, this equivalence restricts to an equivalence $D^b_G(X)\stackrel{\sim}\rightarrow \mathrm{perf}(A)$.
			
			\item[\bf (iii)] If the global dimension of $A$ is finite, then $\mathrm{perf}(A)\simeq D^b(A)$. 
\end{itemize} 
\end{thm}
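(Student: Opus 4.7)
The plan is to deduce this from Keller's general tilting theorem (\cite{KE}, Theorem 8.5), reducing the statement to checking that $\mathcal{T}^{\bullet}$ is a compact generator of $D_G(\mathrm{Qcoh}(X))$ with the prescribed $\mathrm{Ext}$-vanishing, and then separately identifying the subcategory of compact objects on both sides.

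First I would verify the input to Keller's theorem. The category $D_G(\mathrm{Qcoh}(X))\simeq D(\mathrm{Qcoh}([X/G]))$ is a cocomplete triangulated category with a set of compact generators (the equivariant structure sheaves of affine charts of an equivariant atlas, or equivalently one uses that $[X/G]$ is a Noetherian quasi-compact stack with the resolution property). By hypothesis $\mathcal{T}^{\bullet}$ satisfies the Ext vanishing from Definition 3.2(i); condition (iii) of that definition is precisely compactness; and condition (ii) together with Theorem 3.1 (equivalently the Brown-representability/Bondal--Van den Bergh argument) guarantees that $\mathcal{T}^{\bullet}$ generates. Keller's theorem then yields the equivalence
\begin{equation*}
\mathbb{R}\mathrm{Hom}_G(\mathcal{T}^{\bullet},-)\colon D_G(\mathrm{Qcoh}(X))\stackrel{\sim}{\longrightarrow} D(\mathrm{Mod}(A)),\qquad A=\mathrm{End}_G(\mathcal{T}^{\bullet}),
\end{equation*}
proving part (i). Its quasi-inverse is given by the derived tensor product $-\otimes^{\mathbb{L}}_A\mathcal{T}^{\bullet}$.

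For part (ii) I would restrict the equivalence to compact objects. Any equivalence of triangulated categories preserves compactness, and the compact objects of $D(\mathrm{Mod}(A))$ are by definition $\mathrm{perf}(A)$. On the geometric side the point is to identify the compact objects of $D_G(\mathrm{Qcoh}(X))$ with $D^b_G(X)$ when $X$ is smooth. Since $\mathcal{T}^{\bullet}\in D^b_G(X)$ and $D^b_G(X)$ is contained in the compact objects, and because for smooth $X$ every bounded coherent equivariant complex is locally (hence globally, via an equivariant locally free resolution on $[X/G]$, which has the resolution property) a perfect complex, one obtains the identification $D^b_G(X)\simeq D_G(\mathrm{Qcoh}(X))^c$. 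Combined with Theorem 3.1 (applied to $A=\{\mathcal{T}^{\bullet}\}$) one concludes that the equivalence of part (i) restricts to an equivalence $D^b_G(X)\stackrel{\sim}{\to}\mathrm{perf}(A)$. The main obstacle is this smoothness-based identification of compact objects on the stack side; it requires the resolution property of $[X/G]$, which is standard for a finite group acting on a quasiprojective scheme since an equivariant ample line bundle exists after replacing a given ample bundle by $\bigotimes_{g\in G}g^*\mathcal{L}$.

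Finally, part (iii) is a purely algebraic statement. If $A$ has finite global dimension, every finitely generated right $A$-module admits a finite resolution by finitely generated projective $A$-modules, so every object of $D^b(A)$ is quasi-isomorphic to a bounded complex of finitely generated projectives, i.e. lies in $\mathrm{perf}(A)$. The reverse inclusion $\mathrm{perf}(A)\subseteq D^b(A)$ is immediate, yielding $\mathrm{perf}(A)\simeq D^b(A)$. No further geometric input is needed for this part.
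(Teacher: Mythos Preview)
Your proposal is correct and matches the paper's approach: the paper does not give a detailed proof of this theorem but states it as the well-known equivariant tilting correspondence, citing \cite{BR}, Theorem 3.1.1, and noting that it is a direct application of Keller's result \cite{KE}, Theorem 8.5. Your argument is precisely this application of Keller's theorem, with the identification of compact objects on the stack side (using smoothness and the resolution property via an equivariant ample line bundle) and the standard algebraic fact for part (iii) spelled out.
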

 \begin{rema}
\textnormal{If $X$ is a smooth projective $k$-scheme and $G=1$, the derived category $D(\mathrm{Qcoh}(X))$ is compactly generated and the compact objects are exactly $D^b(X)$ (see \cite{BV}). In this case, a compact object $\mathcal{T}^{\bullet}$ generates $D(\mathrm{Qcoh}(X))$ if and only if $\langle\mathcal{T}^{\bullet}\rangle=D^b(X)$. Since the natural functor $D^b(X)\rightarrow D(\mathrm{Qcoh}(X))$ is fully faithful (see \cite{HUY}), a compact object $\mathcal{T}^{\bullet}\in D(\mathrm{Qcoh}(X))$ is a tilting object if and only if $\langle\mathcal{T}^{\bullet}\rangle=D^b(X)$ and $\mathrm{Hom}_{D^b(X)}(\mathcal{T}^{\bullet},\mathcal{T}^{\bullet}[i])=0$ for $i\neq 0$. If the tilting object $\mathcal{T}^{\bullet}$ is a coherent sheaf and $\mathrm{gldim}(\mathrm{End}(\mathcal{T}^{\bullet}))<\infty$, the above definition coincides with the definition of a tilting sheaf given in \cite{B}. In this case the tilting object is called \emph{tilting sheaf} on $X$. If it is a locally free sheaf we simply say that $\mathcal{T}$ is a \emph{tilting bundle}. Theorem 3.3 then gives the classical tilting correspondence as first proved by Bondal \cite{BO} and later extended by Baer \cite{B}}.
\end{rema}
The next observation shows that in Theorem 3.3 the smoothness of $X$ already implies the finiteness of the global dimension of $A$.
\begin{prop}
Let $X$, $G$ and $\mathcal{T}^{\bullet}$ be as in Theorem 3.3. If $X$ is smooth and projective, then $A=\mathrm{End}_G(\mathcal{T}^{\bullet})$ has finite global dimension and therefore the equivalence (i) of Theorem 3.3 restricts to an equivalence $D^b_G(X)\stackrel{\sim}\rightarrow D^b(A)$. 
\end{prop}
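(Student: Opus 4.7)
The plan is to deduce finite global dimension of $A$ from the existence of a Serre functor on $D^b_G(X)$, and then transfer this structure across the equivalence from Theorem 3.3(ii).

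First I will verify that $A$ is a finite-dimensional $k$-algebra. Since $X$ is projective and $\mathcal{T}^{\bullet}\in D^b_G(X)$ is a bounded complex of coherent sheaves, the non-equivariant endomorphism space $\mathrm{End}(\mathcal{T}^{\bullet})=\mathrm{Hom}_{D^b(X)}(\mathcal{T}^{\bullet},\mathcal{T}^{\bullet})$ is finite-dimensional over $k$ by the standard finiteness of cohomology on a proper scheme. By Lemma 2.2, $A=\mathrm{End}(\mathcal{T}^{\bullet})^G$ is a subspace of this space and is therefore also finite-dimensional.

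Next I will establish that $D^b_G(X)$ admits a Serre functor. On the smooth projective scheme $X$ of dimension $d$, classical Serre duality gives a natural isomorphism $\mathrm{Hom}_{D^b(X)}(\mathcal{F}^{\bullet},\mathcal{G}^{\bullet})\simeq \mathrm{Hom}_{D^b(X)}(\mathcal{G}^{\bullet},\mathcal{F}^{\bullet}\otimes\omega_X[d])^{*}$. The canonical bundle $\omega_X$ carries a natural $G$-linearization coming from the $G$-action on $X$, so $S:=(-)\otimes\omega_X[d]$ extends to an autoequivalence of $D^b_G(X)$. Taking $G$-invariants of Serre duality, and using that for a finite group in characteristic zero the operations $(-)^G$ and $(-)^{*}$ commute on finite-dimensional $G$-representations (since $V\simeq V^G\oplus V^{\mathrm{nontriv}}$ by complete reducibility), Lemma 2.2 yields $\mathrm{Hom}_G(\mathcal{F}^{\bullet},\mathcal{G}^{\bullet})\simeq \mathrm{Hom}_G(\mathcal{G}^{\bullet},S(\mathcal{F}^{\bullet}))^{*}$. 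Thus $S$ is a Serre functor on $D^b_G(X)$.

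By Theorem 3.3(ii), the equivalence restricts to $D^b_G(X)\xrightarrow{\sim}\mathrm{perf}(A)$, and therefore transports $S$ to a Serre functor on $\mathrm{perf}(A)$. For a finite-dimensional $k$-algebra $A$, the existence of a Serre functor on $\mathrm{perf}(A)$ is equivalent to $A$ having finite global dimension: applied to the free right module $A$, the Serre functor satisfies $S(A)=DA:=\mathrm{Hom}_k(A,k)$, so $DA$ lies in $\mathrm{perf}(A)$ and has finite projective dimension. This forces $A_A$ to have finite injective dimension, which for an Artinian ring is equivalent to finite global dimension. The final assertion of the proposition is then immediate from Theorem 3.3(iii).

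The main obstacle is handling the equivariant Serre duality cleanly; although it follows in principle from the non-equivariant statement combined with Lemma 2.2 and the commutation of dualization with $(-)^G$ for finite groups in characteristic zero, one must be careful about the naturality required to obtain an honest Serre functor on $D^b_G(X)$ rather than a mere isomorphism of individual Hom-spaces. A secondary point is invoking the folklore equivalence between the existence of a Serre functor on $\mathrm{perf}(A)$ and finite global dimension of a finite-dimensional algebra $A$.
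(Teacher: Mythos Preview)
Your argument contains a genuine gap at the final step. The assertion that, for a finite-dimensional algebra $A$, the existence of a Serre functor on $\mathrm{perf}(A)$ forces $A$ to have finite global dimension is false. Any symmetric algebra provides a counterexample: for $A=k[x]/(x^2)$ (or $A=k[H]$ for a nontrivial finite group $H$) one has $DA\simeq A$ as bimodules, so the Nakayama functor $-\otimes_A DA$ is isomorphic to the identity and furnishes a Serre functor on $\mathrm{perf}(A)$, yet $\mathrm{gldim}(A)=\infty$. Concretely, your chain ``$DA\in\mathrm{perf}(A)\Rightarrow A_A$ has finite injective dimension $\Rightarrow$ finite global dimension'' breaks at the last implication: finite self-injective dimension only says $A$ is (Iwanaga--)Gorenstein, not that $\mathrm{gldim}(A)<\infty$. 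The correct folklore statement involves $D^b(\mathrm{mod}\text{-}A)$ rather than $\mathrm{perf}(A)$: it is $D^b(\mathrm{mod}\text{-}A)$ that admits a Serre functor if and only if $\mathrm{gldim}(A)<\infty$. But invoking this would require knowing $D^b(\mathrm{mod}\text{-}A)=\mathrm{perf}(A)$, which is precisely the conclusion you are after.

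The paper avoids this circularity by a direct Ext-computation, following Hille--Van den Bergh. One uses the equivalence of Theorem~3.3(i) on the level of unbounded categories, $\psi\colon D_G(\mathrm{Qcoh}(X))\xrightarrow{\sim} D(\mathrm{Mod}(A))$, so that for \emph{any} finitely generated $A$-modules $M,N$ one can form $\psi^{-1}(M),\psi^{-1}(N)$ and compute
\[
\mathrm{Ext}^i_A(M,N)\;\simeq\;\mathrm{Hom}_G\bigl(\psi^{-1}(M),\psi^{-1}(N)[i]\bigr)\;\simeq\;\mathrm{Hom}\bigl(\psi^{-1}(M),\psi^{-1}(N)[i]\bigr)^G.
\]
Smoothness of $X$ then forces the right-hand side to vanish for $i\gg 0$ via the local-to-global spectral sequence and Grothendieck vanishing, and since $A$ is finite-dimensional (hence noetherian) this Ext-vanishing for all finitely generated modules yields $\mathrm{gldim}(A)<\infty$. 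Your observation that $A$ is finite-dimensional and that $D^b_G(X)$ carries a Serre functor are both correct and useful, but the Serre-functor route alone does not close the argument.
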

\begin{proof}
Imitating the proof of Theorem 7.6 in \cite{HV}, we argue as follows: For two finitely generated right $A$-modules $M$ and $N$, the equivalence $\psi:=\mathbb{R}\mathrm{Hom}_G(\mathcal{T}^{\bullet},-)\colon D^b_G(X)\rightarrow \mathrm{perf}(A)$ (see Theorem 3.3 (ii)) yields
\begin{center}
$\mathrm{Ext}^i_A(M,N)\simeq \mathrm{Hom}_G(\psi^{-1}(M),\psi^{-1}(N)[i])\simeq \mathrm{Hom}(\psi^{-1}(M),\psi^{-1}(N)[i])^G=0$ 
\end{center}
for $i\gg 0$, since $X$ is smooth. Indeed, this follows from the local-to-global spectral sequence, Grothendieck vanishing Theorem and Lemma 2.2. As $X$ is projective, $A=\mathrm{End}_G(\mathcal{T}^{\bullet})$ is a finite-dimensional $k$-algebra and hence a noetherian ring. But for noetherian rings the vanishing of $\mathrm{Ext}^i_A(M,N)$ for $i\gg 0$ for any two finitely generated $A$-modules $M$ and $N$ suffices to conclude that the global dimension of $A$ has to be finite.   
\end{proof}
The following fact is folklore. It will be needed in Section 4.
\begin{prop}
Let $X$ be a smooth projective $k$ and $\mathcal{T}^{\bullet}$ a tilting object on $X$. Then for an invertible sheaf $\mathcal{L}$ the object $\mathcal{T}^{\bullet}\otimes \mathcal{L}$ is also a tilting object on $X$. 
\end{prop}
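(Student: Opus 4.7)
The plan is to verify the three defining conditions of a tilting object from Definition 3.2 directly, using the fact that tensoring with an invertible sheaf $\mathcal{L}$ is an autoequivalence of $D(\mathrm{Qcoh}(X))$ with inverse $-\otimes\mathcal{L}^{-1}$, and that for any $\mathcal{A}^\bullet,\mathcal{B}^\bullet$ in $D(\mathrm{Qcoh}(X))$ there is a natural isomorphism
\begin{equation*}
\mathrm{Hom}(\mathcal{A}^\bullet\otimes\mathcal{L},\mathcal{B}^\bullet)\simeq \mathrm{Hom}(\mathcal{A}^\bullet,\mathcal{B}^\bullet\otimes\mathcal{L}^{-1}).
\end{equation*}

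For the Ext vanishing (i), I would apply this identity twice (or equivalently use that the autoequivalence $-\otimes\mathcal{L}$ preserves $\mathrm{Hom}$-spaces) to obtain, for every $i\in\mathbb{Z}$,
\begin{equation*}
\mathrm{Hom}\bigl(\mathcal{T}^\bullet\otimes\mathcal{L},(\mathcal{T}^\bullet\otimes\mathcal{L})[i]\bigr)\simeq \mathrm{Hom}(\mathcal{T}^\bullet,\mathcal{T}^\bullet[i]),
\end{equation*}
which vanishes for $i\neq 0$ by the tilting property of $\mathcal{T}^\bullet$. For the generation condition (ii), if $\mathcal{N}^\bullet\in D(\mathrm{Qcoh}(X))$ satisfies $\mathbb{R}\mathrm{Hom}(\mathcal{T}^\bullet\otimes\mathcal{L},\mathcal{N}^\bullet)=0$, then the same adjunction identity (on every cohomology) gives $\mathbb{R}\mathrm{Hom}(\mathcal{T}^\bullet,\mathcal{N}^\bullet\otimes\mathcal{L}^{-1})=0$, so by the tilting property of $\mathcal{T}^\bullet$ one has $\mathcal{N}^\bullet\otimes\mathcal{L}^{-1}=0$, and tensoring back with $\mathcal{L}$ yields $\mathcal{N}^\bullet=0$. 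For the compactness condition (iii), the functor $\mathrm{Hom}(\mathcal{T}^\bullet\otimes\mathcal{L},-)$ is the composite of $-\otimes\mathcal{L}^{-1}$, which commutes with arbitrary direct sums as a left adjoint, with $\mathrm{Hom}(\mathcal{T}^\bullet,-)$, which commutes with direct sums by the compactness of $\mathcal{T}^\bullet$; hence the composite commutes with direct sums.

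There is no real obstacle here — the proof is essentially the observation that $-\otimes\mathcal{L}$ is a triangulated autoequivalence of $D(\mathrm{Qcoh}(X))$, so it sends tilting objects to tilting objects. The only point to be careful about is applying the projection-type isomorphism at the derived level (which is fine because $\mathcal{L}$ is locally free, so $-\otimes\mathcal{L}$ is already exact and requires no derivation), and noting that $\mathcal{L}^{-1}$ exists globally since $\mathcal{L}$ is invertible.
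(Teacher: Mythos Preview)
Your proof is correct. The paper itself does not give a proof of this proposition: it is introduced with ``The following fact is folklore. It will be needed in Section 4.'' and then stated without argument. Your verification of the three conditions in Definition 3.2 via the autoequivalence $-\otimes\mathcal{L}$ of $D(\mathrm{Qcoh}(X))$ is exactly the standard folklore argument the paper has in mind, and there is nothing to add.
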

In the literature, instead of the tilting object $\mathcal{T}^{\bullet}$ one often studies the set $\mathcal{E}^{\bullet}_1,...,\mathcal{E}^{\bullet}_n$ of its indecomposable pairwise non-isomorphic direct summands. There is a special case where all the summands form a so-called full strongly exceptional collection. We recall the definition and follow here \cite{O2}.
\begin{defi}
\textnormal{Let $X$ and $G$ be as in Definition 3.2. An object $\mathcal{E}^{\bullet}\in D^b_G(X)$ is called \emph{exceptional} if $\mathrm{Hom}_G(\mathcal{E}^{\bullet},\mathcal{E}^{\bullet}[l])=0$ when $l\neq 0$, and $\mathrm{Hom}_G(\mathcal{E}^{\bullet},\mathcal{E}^{\bullet})=k$. An \emph{exceptional collection} in $D^b_G(X)$ is a sequence of exceptional objects $\mathcal{E}^{\bullet}_1,...,\mathcal{E}^{\bullet}_n$ satisfying $\mathrm{Hom}_G(\mathcal{E}^{\bullet}_i,\mathcal{E}^{\bullet}_j[l])=0$ for all $l\in \mathbb{Z}$ if $i>j$.}

\textnormal{The exceptional collection is called \emph{strongly exceptional} if in addition $\mathrm{Hom}_G(\mathcal{E}^{\bullet}_i,\mathcal{E}^{\bullet}_j[l])=0$ for all $i$ and $j$ when $l\neq 0$. Finally, we say the exceptional collection is \emph{full} if the smallest full thick subcategory containing all $\mathcal{E}^{\bullet}_i$ equals $D^b_G(X)$.}
\end{defi}
A generalization is the notion of a semiorthogonal decomposition of $D_G^b(X)$.
Recall, a full triangulated subcategory $\mathcal{D}$ of $D^b_G(X)$ is called \emph{admissible} if the inclusion $\mathcal{D}\hookrightarrow D^b_G(X)$ has a left and right adjoint functor.
\begin{defi}
\textnormal{Let $X$ and $G$ be as in Definition 3.2. A sequence $\mathcal{D}_1,...,\mathcal{D}_n$ of full triangulated subcategories of $D^b_G(X)$ is called \emph{semiorthogal} if all $\mathcal{D}_i\subset D^b_G(X)$ are admissible and $\mathcal{D}_j\subset \mathcal{D}_i^{\perp}=\{\mathcal{F}^{\bullet}\in D_G^b(X)\mid \mathrm{Hom}_G(\mathcal{G}^{\bullet},\mathcal{F}^{\bullet})=0$, $\forall$ $ \mathcal{G}^{\bullet}\in\mathcal{D}_i\}$ for $i>j$.}

\textnormal{Such a sequence defines a \emph{semiorthogonal decomposition} of $D^b_G(X)$ if the smallest full thick subcategory containing all $\mathcal{D}_i$ equals $D^b_G(X)$.}
\end{defi}

For a semiorthogonal decomposition of $D_G^b(X)$, we write $D_G^b(X)=\langle\mathcal{D}_1,...,\mathcal{D}_r\rangle$.
\begin{exam}
\textnormal{It is an easy exercise to show that a full exceptional collection $\mathcal{E}^{\bullet}_1,...,\mathcal{E}^{\bullet}_n$ in $D_G^b(X)$ gives rise to a semiorthogonal decomposition $D_G^b(X)=\langle\mathcal{D}_1,...,\mathcal{D}_n\rangle$ with $\mathcal{D}_i=\langle \mathcal{E}^{\bullet}_i\rangle$ (see \cite{HUY}, Example 1.60).} 
\end{exam}

Exceptional collections and semiorthogonal decompositions were intensively studied and we know quite a lot of examples of schemes admitting full exceptional collections or semiorthogonal decompositions. For an overview we refer to \cite{BOO} and \cite{KU}.

\section{Equivariant derived category of flat families}
Let $X$ and $Z$ be smooth projective $k$-schemes on which a finite group $G$ acts. In the sequel we consider $G$-morphisms $\pi\colon X\rightarrow Z$ where the underlying morphism of schemes $X\rightarrow Z$ is flat and proper. Such morphisms we simply call \emph{flat $G$-maps}.\\

We state some preliminary facts.
\begin{lem}
Let $\pi\colon X\rightarrow Z$ be a flat proper morphism and $\mathcal{E}_1,...,\mathcal{E}_n$ a set of locally free sheaves in $D^b(X)$ such that for any point $z\in Z$ the collection $\mathcal{E}^z_1=\mathcal{E}_1\otimes \mathcal{O}_{X_z},...,\mathcal{E}^z_n=\mathcal{E}_n\otimes \mathcal{O}_{X_z}$ of the restrictions to the fiber $\pi^{-1}(z)=X_z$ is a full strongly exceptional collection for $D^b(X_z)$. Then the following holds:
\[\mathbb{R}^s\pi_*(\mathcal{E}_q\otimes\mathcal{E}^{\vee}_p)=\begin{cases}
0& \text{for } s>0\\
0& \text{for s = 0 and } q<p\\
\pi_*(\mathcal{E}_q\otimes\mathcal{E}^{\vee}_p) &\text{for s = 0 and } q\geq p
\end{cases}\]
\end{lem}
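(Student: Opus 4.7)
The plan is to reduce the statement to the fiberwise situation via cohomology and base change. Set $\mathcal{F}:=\mathcal{E}_q\otimes\mathcal{E}^{\vee}_p$. Since $\mathcal{E}_p$ and $\mathcal{E}_q$ are locally free on $X$, so is $\mathcal{F}$, and because $\pi$ is flat, $\mathcal{F}$ is flat over $Z$. Restriction to the fiber $X_z$ gives $\mathcal{F}\otimes\mathcal{O}_{X_z} = \mathcal{E}^z_q\otimes(\mathcal{E}^z_p)^{\vee}$, so that
\[
H^s(X_z,\mathcal{F}\otimes\mathcal{O}_{X_z})\simeq \mathrm{Ext}^s_{X_z}(\mathcal{E}^z_p,\mathcal{E}^z_q).
\]

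Next I would invoke the hypothesis on the fiberwise collection. The strongly exceptional property forces $\mathrm{Ext}^s_{X_z}(\mathcal{E}^z_p,\mathcal{E}^z_q)=0$ for every $s>0$ and every pair $(p,q)$, while the underlying exceptional-collection ordering also forces vanishing for $s=0$ whenever $p>q$. Hence the function $z\mapsto h^s(X_z,\mathcal{F}\otimes\mathcal{O}_{X_z})$ is identically zero on $Z$ in each of the two cases to be proved.

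Finally I would apply cohomology and base change (Grauert, or Hartshorne III.12.11). Since $Z$ is smooth (in particular reduced), $\pi$ is flat and proper, and $\mathcal{F}$ is a coherent $Z$-flat sheaf, one obtains $\mathbb{R}^s\pi_*(\mathcal{F})=0$ in each of the cases $s>0$ (for all $p,q$) and $s=0$ with $q<p$. Concretely, argue by descending induction on $s$: for $s$ larger than the relative dimension of $\pi$, $\mathbb{R}^s\pi_*(\mathcal{F})=0$ is automatic; assuming $\mathbb{R}^{s+1}\pi_*(\mathcal{F})=0$, the base-change map $\mathbb{R}^s\pi_*(\mathcal{F})\otimes k(z)\to H^s(X_z,\mathcal{F}\otimes\mathcal{O}_{X_z})$ is an isomorphism (surjectivity of the upper map triggers the standard base-change criterion), so the coherent sheaf $\mathbb{R}^s\pi_*(\mathcal{F})$ has every fiber equal to zero and is therefore zero by Nakayama. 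Repeating the last step at level $s=0$ produces $\pi_*(\mathcal{F})=0$ in the case $q<p$, while the identity $\mathbb{R}^0\pi_*=\pi_*$ is the tautological third clause.

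The main technical point: essentially none beyond a careful application of the base-change sequence; the fiberwise hypothesis is exactly calibrated to feed Grauert's theorem, and the smoothness of $Z$ ensures the relevant regularity assumptions are automatic, so no genuine obstacle appears.
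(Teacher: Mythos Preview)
Your proposal is correct and follows essentially the same approach as the paper. The paper's proof simply cites \cite{CDR}, p.~430 and remarks that, since $\pi$ is flat and proper, base change applies so the same fiberwise computation goes through for arbitrary locally free $\mathcal{E}_i$; your argument is precisely this computation spelled out, using cohomology and base change (Grauert/Hartshorne III.12.11) to pass from the fiberwise vanishing $\mathrm{Ext}^s_{X_z}(\mathcal{E}^z_p,\mathcal{E}^z_q)=0$ supplied by the strongly exceptional hypothesis to $\mathbb{R}^s\pi_*(\mathcal{E}_q\otimes\mathcal{E}_p^{\vee})=0$.
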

\begin{proof}
If $\pi\colon X\rightarrow Z$ is a locally trivial fibration with typical fiber $F$ and $\mathcal{E}_1,...,\mathcal{E}_n$ are invertible sheaves this is exactly the claim of \cite{CDR}, p.430. As $\pi\colon X\rightarrow Z$ is flat and proper, flat base change holds (see \cite{HUY}, (3.18)). Carrying out the same arguments as in the proof of the claim of \cite{CDR}, p.430, we see that the above statement also holds for $\pi\colon X\rightarrow Z$ flat and proper and $\mathcal{E}_1,...,\mathcal{E}_n$ being arbitrary locally free sheaves.
\end{proof}
\begin{lem}
Let $X$ be a smooth projective $k$-scheme on which a finite group $G$ acts. Then there exists an equivariant ample sheaf $\mathcal{N}$.
\end{lem}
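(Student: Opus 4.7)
The plan is to start from any ample invertible sheaf on $X$ (which exists because $X$ is projective) and then \emph{average} it over the finite group $G$ in order to manufacture an ample line bundle that carries a natural $G$-linearization.

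Concretely, I would pick an ample invertible sheaf $\mathcal{L}$ on $X$ and set
\[
\mathcal{N} \;:=\; \bigotimes_{g\in G} g^{*}\mathcal{L}.
\]
Ampleness of $\mathcal{N}$ is immediate: each $g\colon X\to X$ is an automorphism, so each $g^{*}\mathcal{L}$ is again ample, and the tensor product of finitely many ample invertible sheaves on a projective scheme is ample.

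To equip $\mathcal{N}$ with an equivariant structure, for each $h\in G$ I would observe that
\[
h^{*}\mathcal{N} \;=\; \bigotimes_{g\in G} h^{*}g^{*}\mathcal{L} \;=\; \bigotimes_{g\in G} (gh)^{*}\mathcal{L},
\]
and since the map $g\mapsto gh$ is a bijection of $G$ the right-hand side is the \emph{same} tensor product as $\mathcal{N}$, with its factors merely relabelled. This furnishes a tautological isomorphism $\lambda_h\colon \mathcal{N}\stackrel{\sim}\rightarrow h^{*}\mathcal{N}$.

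The content of the lemma then reduces to checking $\lambda_1=\mathrm{id}_{\mathcal{N}}$ and the cocycle identity $\lambda_{gh}=h^{*}\lambda_g\circ \lambda_h$; both become identities of composition of index-bijections on $G$ and are straightforward. The only genuine obstacle is this bookkeeping, which is routine but easy to slip on; everything else (existence of $\mathcal{L}$, preservation of ampleness) is immediate. As a sanity check one may alternatively invoke the classical fact from Mumford's GIT that for a finite group acting on a normal quasiprojective variety there exists a $G$-linearized ample line bundle, but the averaging construction above gives a self-contained proof.
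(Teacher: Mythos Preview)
Your proposal is correct and follows exactly the same approach as the paper: take an ample invertible sheaf $\mathcal{L}$, form $\mathcal{N}=\bigotimes_{g\in G} g^{*}\mathcal{L}$, and use the permutation of factors to define the $G$-linearization. In fact you spell out the cocycle verification in more detail than the paper, which simply asserts that $\mathcal{N}$ ``has a natural equivariant structure''.
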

\begin{proof}
Let $\mathcal{L}$ be an ample invertible sheaf on $X$, then $g^*\mathcal{L}$ is ample for any $g\in G$. Now the tensor product $\bigotimes_{g\in G} g^*\mathcal{L}$ is ample and has a natural equivariant structure $\lambda$. Take $(\mathcal{N}, \lambda)=(\bigotimes_{g\in G} g^*\mathcal{L}, \lambda)$.
\end{proof}
\begin{lem}
Let $\pi\colon X\rightarrow Z$ and $\mathcal{E}_i$ be as in Lemma 4.1. Suppose $\mathcal{A}^{\bullet}$ is a compact object with $\langle \mathcal{A}^{\bullet}\rangle=D^b(Z)$, then $\langle\bigoplus^n_{i=1}\pi^*(\mathcal{A}^{\bullet})\otimes \mathcal{E}_i\rangle=D^b(X)$ and therefore $\bigoplus^n_{i=1}\pi^*(\mathcal{A}^{\bullet})\otimes \mathcal{E}_i$ generates $D(\mathrm{Qcoh}(X))$.
\end{lem}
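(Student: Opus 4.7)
My plan is to deduce both assertions simultaneously by showing that the compact object $\mathcal{P} := \bigoplus_{i=1}^n \pi^*(\mathcal{A}^\bullet) \otimes \mathcal{E}_i$ generates all of $D(\mathrm{Qcoh}(X))$ in the sense of Theorem 3.1. Since $X$ is smooth projective, $D(\mathrm{Qcoh}(X))^c = D^b(X)$, so Theorem 3.1 then automatically yields $\langle \mathcal{P}\rangle = D^b(X)$ as well, which is the first assertion.

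First I would enlarge the generating set. As $\pi^*$ and $(-)\otimes \mathcal{E}_i$ are exact functors that preserve direct summands, the collection $\{\mathcal{G}^\bullet \in D^b(Z) \mid \pi^*\mathcal{G}^\bullet \otimes \mathcal{E}_i \in \langle \mathcal{P}\rangle \text{ for every } i\}$ is a thick subcategory of $D^b(Z)$; it contains $\mathcal{A}^\bullet$, hence by the hypothesis $\langle\mathcal{A}^\bullet\rangle = D^b(Z)$ it is all of $D^b(Z)$. Thus it suffices to prove that the set $S := \{\pi^*\mathcal{G}^\bullet \otimes \mathcal{E}_i : \mathcal{G}^\bullet \in D^b(Z),\ 1\leq i\leq n\}$ of compact objects generates $D(\mathrm{Qcoh}(X))$. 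Pick $\mathcal{F}^\bullet \in S^\perp$. By the $(\mathbb{L}\pi^*, \mathbb{R}\pi_*)$-adjunction and the projection formula, the condition $\mathrm{Hom}_X(\pi^*\mathcal{G}^\bullet \otimes \mathcal{E}_i, \mathcal{F}^\bullet[l]) = 0$ rewrites as $\mathrm{Hom}_Z(\mathcal{G}^\bullet, \mathbb{R}\pi_*(\mathcal{E}_i^\vee \otimes \mathcal{F}^\bullet)[l]) = 0$ for every $\mathcal{G}^\bullet \in D^b(Z) = D(\mathrm{Qcoh}(Z))^c$ and every $i, l$. Since $Z$ is smooth projective, Theorem 3.1 tells us those compact objects generate $D(\mathrm{Qcoh}(Z))$, and we deduce $\mathbb{R}\pi_*(\mathcal{E}_i^\vee \otimes \mathcal{F}^\bullet) = 0$ for every $i$.

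The concluding step is a fiberwise vanishing argument. Flat base change along the inclusion $i_z : X_z\hookrightarrow X$ of each closed fiber (valid because $\pi$ is flat and proper) yields $\mathbb{R}\mathrm{Hom}_{X_z}(\mathcal{E}_i^z, \mathbb{L}i_z^*\mathcal{F}^\bullet) = 0$ for every $i$ and every closed $z \in Z$. The fiberwise hypothesis that $\mathcal{E}_1^z,\ldots,\mathcal{E}_n^z$ is a full strong exceptional collection on $X_z$, combined with Theorem 3.1 applied to $X_z$ (which is then automatically regular since its bounded derived category is generated by locally free sheaves), forces $\mathbb{L}i_z^*\mathcal{F}^\bullet = 0$ for every closed $z$; as the closed fibers cover $X$, a standard Nakayama argument at the closed points of $X$ gives $\mathcal{F}^\bullet = 0$. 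The main obstacle I anticipate is the careful handling of flat base change and of fiberwise conservativity for possibly unbounded objects of $D(\mathrm{Qcoh}(X))$; this reduces to standard Noetherian bookkeeping but is the only point beyond routine adjunction and projection formula manipulations.
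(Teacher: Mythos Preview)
Your strategy diverges from the paper's: the paper simply invokes Samokhin's semiorthogonal decomposition $D^b(X)=\langle \pi^*D^b(Z)\otimes\mathcal{E}_1,\ldots,\pi^*D^b(Z)\otimes\mathcal{E}_n\rangle$, notes that each block is equivalent to $D^b(Z)$ via $\pi^*(-)\otimes\mathcal{E}_i$, and reads off $\langle\mathcal{P}\rangle=D^b(X)$ immediately from $\langle\mathcal{A}^\bullet\rangle=D^b(Z)$; Theorem~3.1 then gives generation of $D(\mathrm{Qcoh}(X))$. Your first reduction (enlarging to $S=\{\pi^*\mathcal{G}^\bullet\otimes\mathcal{E}_i\}$) and the adjunction/base-change step leading to $\mathbb{L}i_z^*\mathcal{F}^\bullet=0$ on every closed fibre are fine, and in effect reprove the generation half of Samokhin's theorem by hand.

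The genuine gap is the final ``standard Nakayama argument''. Derived restriction to closed fibres (or closed points) is \emph{not} conservative on $D(\mathrm{Qcoh}(X))$: take $X=Z=\mathbb{P}^1$, $\pi=\mathrm{id}$, and let $\mathcal{F}=j_*k(\eta)$ be the pushforward of the function field from the generic point. For every closed point $x$ the stalk $k(\eta)$ is a flat $\mathcal{O}_{X,x}$-module on which the uniformizer acts invertibly, so $\mathbb{L}i_x^*\mathcal{F}=k(\eta)\otimes_{\mathcal{O}_{X,x}}k(x)=0$, yet $\mathcal{F}\neq 0$. The Nakayama lemma you invoke needs coherent cohomology, which an arbitrary $\mathcal{F}^\bullet\in S^\perp\subset D(\mathrm{Qcoh}(X))$ need not have; this is precisely the ``fiberwise conservativity for possibly unbounded objects'' that you flag but then dismiss as routine. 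The cleanest repair is to stay in $D^b(X)$ throughout---either by citing the semiorthogonal decomposition as the paper does, or by using a relative Beilinson resolution of $\mathcal{O}_{\Delta_{X/Z}}$ built from the $\mathcal{E}_i$---and only afterwards invoke Theorem~3.1 to pass to $D(\mathrm{Qcoh}(X))$.
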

\begin{proof}
In \cite{SAM}, Theorem 3.1 it is proved that the functor $\pi^*(-)\otimes \mathcal{E}_i\colon D^b(Z)\rightarrow D^b(X)$ is fully faithful and that $D^b(X)=\langle \pi^*D^b(Z)\otimes \mathcal{E}_1,...,\pi^*D^b(Z)\otimes \mathcal{E}_n\rangle$ is a semiorthogonal decomposition. Here the full subcategories $\pi^*D^b(Z)\otimes\mathcal{E}_i$ consist of objects of the form $\pi^*(\mathcal{F}^{\bullet})\otimes \mathcal{E}_i$, where $\mathcal{F}^{\bullet}\in D^b(Z)$. Therefore, the functor $\pi^*(-)\otimes \mathcal{E}_i$ from above induces an equivalence between $D^b(Z)$ and $\pi^*D^b(Z)\otimes\mathcal{E}_i$. Since $\langle\mathcal{A}^{\bullet}\rangle=D^b(Z)$, we immediately get $\langle\bigoplus^n_{i=1}\pi^*(\mathcal{A}^{\bullet})\otimes \mathcal{E}_i\rangle=D^b(X)$. Note that the compact objects of $D(\mathrm{Qcoh}(X))$ are all of $D^b(X)$ (see \cite{BV}). The rest follows from Theorem 3.1 as $\bigoplus^n_{i=1}\pi^*(\mathcal{A}^{\bullet})\otimes \mathcal{E}_i$ is a compact object of $D(\mathrm{Qcoh}(X))$. 
\end{proof}

To prove Theorem 4.5 below, we apply the following result that we only cite (see \cite{NO}, Theorem 4.1 and 4.2)
\begin{thm}
Let $X$ be a smooth projective $k$-scheme and $G$ a finite group acting on $X$. Suppose there is a $\mathcal{T}^{\bullet}\in D_G(\mathrm{Qcoh}(X))$ which, considered as an object in $D(\mathrm{Qcoh}(X))$, is a tilting object on $X$. Let $k[G]=\bigoplus_i W^{\oplus \mathrm{dim}(W_i)}_i$ be the regular representation of $G$, then $\mathcal{T}^{\bullet}\otimes k[G]$ and $\bigoplus_i\mathcal{T}^{\bullet}\otimes W_i$ are tilting objects on $[X/G]$.
\end{thm}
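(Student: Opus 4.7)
The plan is to verify the three conditions of Definition 3.2 for $\mathcal{T}^{\bullet}\otimes k[G]$ by establishing and then exploiting the key identity
\[
\mathrm{Hom}_G(\mathcal{T}^{\bullet}\otimes k[G],\mathcal{N}^{\bullet}[i])\simeq \mathrm{Hom}(\mathcal{T}^{\bullet},\mathcal{N}^{\bullet}[i]),
\]
valid for every $\mathcal{N}^{\bullet}\in D_G(\mathrm{Qcoh}(X))$ and every $i\in\mathbb{Z}$, where on the right the equivariant structures are forgotten. To prove this, I would first apply (the quasi-coherent version of) Lemma 2.2 to rewrite the left-hand side as $\mathrm{Hom}(\mathcal{T}^{\bullet}\otimes k[G],\mathcal{N}^{\bullet}[i])^G$, then invoke the tensor-hom identity $\mathrm{Hom}(A\otimes V,B)\simeq \mathrm{Hom}(A,B)\otimes V^{*}$ together with the self-duality $k[G]^{*}\simeq k[G]$ of the regular representation, and finally use the elementary fact that $(M\otimes k[G])^G\simeq M$ for any $G$-representation $M$, realized by the inverse map $m\mapsto \sum_{h\in G}(h\cdot m)\otimes h$.

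Once the identity is in hand, each tilting condition follows at once. Setting $\mathcal{N}^{\bullet}=\mathcal{T}^{\bullet}\otimes k[G]$, the right-hand side reduces to a finite direct sum of copies of $\mathrm{Hom}(\mathcal{T}^{\bullet},\mathcal{T}^{\bullet}[i])$, which vanishes for $i\neq 0$ by the tilting hypothesis on $\mathcal{T}^{\bullet}$, giving Ext vanishing. For generation, if $\mathbb{R}\mathrm{Hom}_G(\mathcal{T}^{\bullet}\otimes k[G],\mathcal{N}^{\bullet})=0$, then $\mathbb{R}\mathrm{Hom}(\mathcal{T}^{\bullet},\mathcal{N}^{\bullet})=0$ after forgetting the equivariant structure, so $\mathcal{N}^{\bullet}=0$ in $D(\mathrm{Qcoh}(X))$ and therefore also in $D_G(\mathrm{Qcoh}(X))$. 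Compactness is inherited because the forgetful functor preserves direct sums, reducing commutation of $\mathrm{Hom}_G(\mathcal{T}^{\bullet}\otimes k[G],-)$ with coproducts to that of $\mathrm{Hom}(\mathcal{T}^{\bullet},-)$.

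For the second object, the decomposition $k[G]=\bigoplus_i W_i^{\oplus \dim W_i}$ gives $\mathcal{T}^{\bullet}\otimes k[G]=\bigoplus_i(\mathcal{T}^{\bullet}\otimes W_i)^{\oplus \dim W_i}$, so $\mathcal{T}^{\bullet}\otimes k[G]$ and $\bigoplus_i\mathcal{T}^{\bullet}\otimes W_i$ share the same collection of indecomposable direct summands up to multiplicity. Each of the three tilting conditions is insensitive to such multiplicity changes — Ext vanishing and compactness are biadditive, while generation depends only on the set of summands — so the second object is tilting as well. The main obstacle in this program is the first step: verifying that the diagonal equivariant structure on $\mathcal{T}^{\bullet}\otimes k[G]$ induces precisely the tensor-product $G$-action on $\mathrm{Hom}(\mathcal{T}^{\bullet},\mathcal{N}^{\bullet}[i])\otimes k[G]^{*}$ so that the invariants recover $\mathrm{Hom}(\mathcal{T}^{\bullet},\mathcal{N}^{\bullet}[i])$. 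Conceptually this is the statement that tensoring with $k[G]$ is induction from the trivial subgroup and hence left adjoint to the forgetful functor, but the identification must be tracked carefully on the level of equivariant structures.
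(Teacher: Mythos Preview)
The paper does not give its own proof of this statement; it is merely cited from \cite{NO}, Theorem 4.1 and 4.2. Your proposal therefore cannot be compared line by line with anything in the present paper, but it is a correct and self-contained argument. The key identity you isolate,
\[
\mathrm{Hom}_G(\mathcal{T}^{\bullet}\otimes k[G],\mathcal{N}^{\bullet}[i])\simeq \mathrm{Hom}(\mathcal{T}^{\bullet},\mathcal{N}^{\bullet}[i]),
\]
is precisely the statement that $(-)\otimes k[G]$ is left adjoint to the forgetful functor (Frobenius reciprocity, or induction--restriction adjunction), and this is indeed the standard engine behind results of this type; the same mechanism underlies the proof in \cite{NO} and in \cite{BR}. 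Your derivation of it via Lemma~2.2 together with $(M\otimes k[G])^G\simeq M$ is clean, and the concern you flag at the end --- that the equivariant structures match up to give exactly the tensor-product $G$-action --- is a routine unwinding of the definition $g\cdot f=(\mu_g)^{-1}\circ g^*f\circ\lambda_g$ from Section~2 and presents no real obstacle. One small point: Lemma~2.2 is stated for $D^b_G(X)$, but the displayed isomorphism $\mathrm{Hom}_G(\mathcal{F}^{\bullet},\mathcal{G}^{\bullet})\simeq\mathrm{Hom}(\mathcal{F}^{\bullet},\mathcal{G}^{\bullet})^G$ just before it, which is all you need, uses only exactness of $(-)^G$ and carries over to $D_G(\mathrm{Qcoh}(X))$ without change.
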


\begin{thm}
Let $\pi\colon X\rightarrow Z$ be a flat $G$-map and $\mathcal{E}_1,...,\mathcal{E}_n$ a set of locally free sheaves in $D^b_G(X)$ such that, considered as a set of objects in $D^b(X)$, for any point $z\in Z$ the collection $\mathcal{E}^z_1=\mathcal{E}_1\otimes \mathcal{O}_{X_z},...,\mathcal{E}^z_n=\mathcal{E}_n\otimes \mathcal{O}_{X_z}$ of the restrictions to the fiber $\pi^{-1}(z)=X_z$ is a full strongly exceptional collection for $D^b(X_z)$. Suppose $\mathcal{T}\in\mathrm{Coh}_G(Z)$ is a tilting bundle on $Z$. There is an equivariant ample sheaf $\mathcal{M}$ on $Z$ such that $(\bigoplus^n_{i=1}\pi^*(\mathcal{T}\otimes \mathcal{M}^{\otimes i})\otimes \mathcal{E}_i)\otimes k[G]$ is a tilting bundle on $[X/G]$.
\end{thm}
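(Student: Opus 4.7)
The plan is to reduce to the non-equivariant Theorem 4.15 of \cite{NO1} recalled in the introduction, and then promote the resulting tilting bundle on $X$ to a tilting bundle on $[X/G]$ via Theorem 4.4.

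First I would exhibit an equivariant ample sheaf on $Z$. By Lemma 4.2 there is an equivariant ample invertible sheaf $\mathcal{N}$ on $Z$; every tensor power $\mathcal{N}^{\otimes r}$ remains equivariant and ample. Set $\mathcal{M}:=\mathcal{N}^{\otimes r}$ with $r\geq 1$ to be fixed later. Because $\mathcal{T}$, $\mathcal{M}$ and each $\mathcal{E}_i$ carry compatible $G$-linearizations and $\pi$ is a $G$-morphism, the locally free sheaf
\[
\mathcal{U} \;:=\; \bigoplus_{i=1}^n \pi^*(\mathcal{T}\otimes \mathcal{M}^{\otimes i})\otimes \mathcal{E}_i
\]
acquires a canonical equivariant structure and lies in $\mathrm{Coh}_G(X)$.

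Next I would show that $\mathcal{U}$ is a tilting bundle on $X$ (in the non-equivariant sense) for all sufficiently large $r$, by running the proof of Theorem 4.15 of \cite{NO1} essentially verbatim. Forgetting equivariant structures, the hypotheses of that theorem are met: $\pi$ is flat and proper, $\mathcal{T}$ is a tilting bundle on $Z$, and the fiberwise full strong exceptionality of the $\mathcal{E}_i$ is unaffected by dropping the $G$-linearization. The ample sheaf enters that proof only to force Serre vanishing of higher cohomology on $Z$ of sheaves of the shape $\mathcal{M}^{\otimes(q-p)}\otimes \pi_*(\mathcal{E}_q\otimes \mathcal{E}_p^{\vee})\otimes \mathcal{T}^{\vee}\otimes \mathcal{T}$, using the projection formula, the Leray spectral sequence and Lemma 4.1 to reduce the Ext-vanishing on $X$ to such cohomology on $Z$. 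Since there are only finitely many pairs $(p,q)$, one can pick $r$ large enough that Serre vanishing simultaneously holds for all of them, which establishes the Ext-vanishing half of the tilting property. Generation on $X$ is supplied by Lemma 4.3 applied to $\mathcal{A}^{\bullet}=\bigoplus_i \mathcal{T}\otimes \mathcal{M}^{\otimes i}$, together with the fact that tilting bundles on $Z$ generate $D^b(Z)$ (and so do twists by $\mathcal{M}^{\otimes i}$ by Proposition 3.6).

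Finally, $\mathcal{U}\in D^b_G(X)$ and, considered in $D^b(X)$, is a tilting bundle on $X$. Theorem 4.4 then yields that $\mathcal{U}\otimes k[G]$ is a tilting object on $[X/G]$. Being a finite direct sum of copies of the locally free sheaf $\mathcal{U}$ (with a twisted $G$-action via the regular representation), it is itself a locally free equivariant sheaf, hence a tilting bundle on $[X/G]$. The principal obstacle is to reconcile the Serre vanishing required by the non-equivariant argument with the constraint that $\mathcal{M}$ be $G$-equivariant; this is dispatched by observing that Serre vanishing holds for every sufficiently high tensor power of any ample sheaf, and Lemma 4.2 provides such an ample sheaf in the equivariant category.
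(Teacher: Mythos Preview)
Your proposal is correct and follows essentially the same path as the paper's proof: choose $\mathcal{M}=\mathcal{N}^{\otimes m}$ with $\mathcal{N}$ the equivariant ample sheaf of Lemma~4.2, establish Ext-vanishing via the Leray spectral sequence, the projection formula, Lemma~4.1 and Serre vanishing (the paper spells this computation out explicitly rather than citing \cite{NO1}), obtain generation from Proposition~3.6 together with the argument underlying Lemma~4.3, and finish with Theorem~4.4. The only cosmetic slip is that Lemma~4.3 applied literally to $\mathcal{A}^{\bullet}=\bigoplus_i\mathcal{T}\otimes\mathcal{M}^{\otimes i}$ yields generation by the larger sum $\bigoplus_{i,j}\pi^*(\mathcal{T}\otimes\mathcal{M}^{\otimes j})\otimes\mathcal{E}_i$; both you and the paper implicitly use the obvious variant in which a separate generator $\mathcal{T}\otimes\mathcal{M}^{\otimes i}$ is fed into the $i$-th component of the semiorthogonal decomposition from \cite{SAM}.
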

\begin{proof}
Below we show that there exists an equivariant ample sheaf $\mathcal{M}$ on $Z$ such that $(\bigoplus^n_{i=1}\pi^*(\mathcal{T}\otimes \mathcal{M}^{\otimes i})\otimes \mathcal{E}_i)\otimes k[G]$ is a tilting bundle on $[X/G]$. Note that by construction, $(\bigoplus^n_{i=1}\pi^*(\mathcal{T}\otimes \mathcal{M}^{\otimes i})\otimes \mathcal{E}_i)\otimes k[G]$ is a compact object of $D_G(\mathrm{Qcoh}(X))$. To apply Theorem 4.4, we only have to show that $(\bigoplus^n_{i=1}\pi^*(\mathcal{T}\otimes \mathcal{M}^{\otimes i})\otimes \mathcal{E}_i)$ is a tilting bundle on $X$ that in addition admits an equivariant structure. For this, we take an equivariant ample sheaf $\mathcal{N}$ on $Z$. Such a $\mathcal{N}$ exists according to Lemma 4.2. Now let $\mathcal{M}=\mathcal{N}^{\otimes m}$ for $m\gg 0$. So we have to find a natural number $m\gg0$ such that
\begin{center}
$\mathrm{Ext}^l(\pi^*(\mathcal{T}\otimes\mathcal{M}^{\otimes i})\otimes\mathcal{E}_i,\pi^*(\mathcal{T}\otimes\mathcal{M}^{\otimes j})\otimes \mathcal{E}_j)=0$, for $l>0$.
\end{center}
But this is equivalent to
\begin{center}
$H^l(X,\pi^*(\mathcal{T}\otimes\mathcal{T}^{\vee}\otimes\mathcal{M}^{\otimes (j-i)})\otimes\mathcal{E}_j\otimes\mathcal{E}^{\vee}_i)=0$, for $l>0$.
\end{center}
Applying the Leray spectral sequence (1) to the morphism $\pi\colon X\rightarrow Z$, we obtain
\begin{eqnarray*}
H^r(Z,\mathbb{R}^s\pi_*(\pi^*(\mathcal{T}\otimes\mathcal{T}^{\vee}\otimes\mathcal{M}^{\otimes (j-i)})\otimes\mathcal{E}_j\otimes\mathcal{E}^{\vee}_i))\Longrightarrow\\
H^{r+s}(X,\pi^*(\mathcal{T}\otimes\mathcal{T}^{\vee}\otimes\mathcal{M}^{\otimes (j-i)})\otimes\mathcal{E}_j\otimes\mathcal{E}^{\vee}_i).
\end{eqnarray*}
With the projection formula we find
\begin{center}
$\mathbb{R}^s\pi_*(\pi^*(\mathcal{T}\otimes\mathcal{T}^{\vee}\otimes\mathcal{M}^{\otimes (j-i)})\otimes\mathcal{E}_j\otimes\mathcal{E}^{\vee}_i)\simeq \mathcal{T}\otimes\mathcal{T}^{\vee}\otimes\mathcal{M}^{\otimes (j-i)}\otimes\mathbb{R}^s\pi_*(\mathcal{E}_j\otimes\mathcal{E}^{\vee}_i)$.
\end{center}
Now from Lemma 4.1 we know that $\mathbb{R}^s\pi_*(\mathcal{E}_j\otimes\mathcal{E}^{\vee}_i)$ is non-vanishing only for $s=0$ and $j\geq i$ and that in this case we have $\mathbb{R}^s\pi_*(\mathcal{E}_j\otimes\mathcal{E}^{\vee}_i)\simeq \pi_*(\mathcal{E}_j\otimes\mathcal{E}^{\vee}_i)$. Thus for $j<i$ we get $\mathbb{R}^s\pi_*(\mathcal{E}_j\otimes\mathcal{E}^{\vee}_i)=0$ and therefore 
\begin{center}
$H^r(Z,\mathcal{T}\otimes\mathcal{T}^{\vee}\otimes\mathcal{M}^{\otimes (j-i)}\otimes\mathbb{R}^s\pi_*(\mathcal{E}_j\otimes\mathcal{E}^{\vee}_i))=0$. 
\end{center}
Therefore we find 
\begin{center}
$H^{l}(X,\pi^*(\mathcal{T}\otimes\mathcal{T}^{\vee}\otimes\mathcal{M}^{\otimes (j-i)})\otimes\mathcal{E}_j\otimes\mathcal{E}^{\vee}_i)=0$, 
\end{center}
for $l>0$ by above spectral sequence. It remains the case $j\geq i$. For $j=i$ we have $\mathbb{R}^s\pi_*(\mathcal{E}_i\otimes\mathcal{E}^{\vee}_i)\simeq \pi_*(\mathcal{E}_i\otimes\mathcal{E}^{\vee}_i)\simeq \mathcal{O}_Z$ (see \cite{SAM}, p.5 right after (3.10)). Hence
\begin{eqnarray*}
H^r(Z,\mathcal{T}\otimes\mathcal{T}^{\vee}\otimes\mathcal{M}^{\otimes (i-i)}\otimes\mathbb{R}^s\pi_*(\mathcal{E}_i\otimes\mathcal{E}^{\vee}_i))&\simeq& H^r(Z,\mathcal{T}\otimes \mathcal{T}^{\vee}\otimes \mathcal{O}_Z)\\
&\simeq&\mathrm{Ext}^r(\mathcal{T},\mathcal{T})=0
\end{eqnarray*}
for $r>0$, as $\mathcal{T}$ is a tilting bundle on $Z$. From the above spectral sequence we conclude 
\begin{center}
$H^{l}(X,\pi^*(\mathcal{T}\otimes\mathcal{T}^{\vee}\otimes\mathcal{M}^{\otimes (i-i)})\otimes\mathcal{E}_i\otimes\mathcal{E}^{\vee}_i)=0$,
\end{center} for $l>0$. Finally, it remains the case $j>i$. Again we consider the above spectral sequence and notice that it becomes
\begin{eqnarray*}
H^r(Z, \mathcal{T}\otimes \mathcal{T}^{\vee}\otimes \mathcal{M}^{\otimes (j-i)}\otimes \pi_*(\mathcal{E}_j\otimes\mathcal{E}^{\vee}_i))\Longrightarrow\\
H^{r}(X,\pi^*(\mathcal{T}\otimes\mathcal{T}^{\vee}\otimes\mathcal{M}^{\otimes (j-i)})\otimes\mathcal{E}_j\otimes\mathcal{E}^{\vee}_i). 
\end{eqnarray*}
Since there are only finitely many $\mathcal{E}_i$ and $Z$ is projective, we can choose the natural number $m\gg0$ such that for $\mathcal{M}=\mathcal{N}^{\otimes m}$ we get from the ampleness 
\begin{center}
$H^r(Z,\mathcal{T}\otimes\mathcal{T}^{\vee}\otimes\mathcal{M}^{\otimes (j-i)}\otimes\pi_*(\mathcal{E}_j\otimes\mathcal{E}^{\vee}_i))=0$ for $r>0$.
\end{center}
This finally yields 
\begin{center}
$H^l(X,\pi^*(\mathcal{T}\otimes\mathcal{T}^{\vee}\otimes\mathcal{M}^{\otimes (j-i)})\otimes\mathcal{E}_j\otimes\mathcal{E}^{\vee}_i)=0$ for $l>0$
\end{center} and therefore
\begin{center}
$\mathrm{Ext}^l(\pi^*(\mathcal{T}\otimes\mathcal{M}^{\otimes i})\otimes\mathcal{E}_i,\pi^*(\mathcal{T}\otimes\mathcal{M}^{\otimes j})\otimes \mathcal{E}_j)=0$ for $l>0$.
\end{center} Thus the Ext vanishing holds. The generating property of $\mathcal{R}:=\bigoplus^n_{i=1}\pi^*(\mathcal{T}\otimes \mathcal{M}^{\otimes i})\otimes \mathcal{E}_i$ can be seen as follows:

By assumption, $\mathcal{T}$ is a tilting bundle on $Z$. Now Proposition 3.6 shows that $\mathcal{T}\otimes \mathcal{M}^{\otimes i}$ is a tilting bundle on $Z$ for all $i\in \mathbb{Z}$. In particular, $\mathcal{T}\otimes \mathcal{M}^{\otimes i}$ generates $D(\mathrm{Qcoh}(Z))$ and thus $\langle\mathcal{T}\otimes \mathcal{M}^{\otimes i}\rangle=D^b(Z)$ by Theorem 3.1. Now Lemma 4.3. implies that $\langle\mathcal{R}\rangle=D^b(X)$ and hence $\mathcal{R}$ generates $D(\mathrm{Qcoh}(X))$, again by Theorem 3.1. Together with the Ext vanishing this yields that $\mathcal{R}$ is a tilting bundle on $X$. Now notice that $\mathcal{T}, \mathcal{M}\in \mathrm{Coh}_G(Z)$ and $\mathcal{E}_i\in \mathrm{Coh}_G(X)$. So $\pi^*(\mathcal{T}\otimes \mathcal{M}^{\otimes i})\in \mathrm{Coh}_G(X)$ and thus $\mathcal{R}\in D^b_G(X)$. Theorem 4.4 implies that $\mathcal{R}\otimes k[G]$ is a tilting bundle on $[X/G]$ and completes the proof. 
\end{proof}

\begin{exam}
\textnormal{Let $Z$ be a smooth projective $k$-scheme on which a finite group $G$ acts. Suppose $\mathcal{T}\in \mathrm{Coh}_G(Z)$, considered as an object in $\mathrm{Coh}(Z)$, is a tilting bundle on $Z$. Now let $\mathcal{E}$ be an equivariant locally free sheaf of rank $r$ on $Z$. Then $\pi\colon \mathbb{P}(\mathcal{E})\rightarrow Z$ is a flat $G$-map. As $G$ acts naturally on $\mathbb{P}(\mathcal{E})$, the sheaves $\mathcal{O}_{\mathcal{E}}, \mathcal{O}_{\mathcal{E}}(1),..., \mathcal{O}_{\mathcal{E}}(r-1)$ are objects of $\mathrm{Coh}_G(\mathbb{P}(\mathcal{E}))$. It can be shown that their restriction to any fiber $\pi^{-1}(z)$, $z\in Z$, is a full strongly exceptional collection. Essentially, this follows from \cite{HA}, Exercise 8.4 and \cite{BE}. Theorem 4.5 now states that there exists an equivariant ample sheaf $\mathcal{M}$ on $Z$ such that $(\bigoplus^{r-1}_{i=0}\pi^*(\mathcal{T}\otimes \mathcal{M}^{\otimes (i+1)})\otimes \mathcal{O}_{\mathcal{E}}(i))\otimes k[G]$ is a tilting bundle on $[ \mathbb{P}(\mathcal{E})/G]$. Note that this is proved in \cite{NO} in a direct way.}
\end{exam}
The following fact is well known. For convenience of the reader, we give a proof.
\begin{prop}
Let $X$ be a smooth projective $k$-scheme and suppose $\mathcal{T}$ is a tilting bundle on $[X/G]$ whose indecomposable pairwise non-isomorphic direct summands $\mathcal{L}_i$ are invertible sheaves. Then the set of the $\mathcal{L}_i$ can be ordered in such a way that the reordered set forms a full strongly exceptional collection in $D^b_G(X)$.
\end{prop}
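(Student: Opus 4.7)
The plan is to verify the three conditions for a full strongly exceptional collection directly from the tilting hypothesis. Write $\mathcal{T} \cong \bigoplus_{i=1}^n \mathcal{L}_i^{\oplus m_i}$ with pairwise non-isomorphic invertible equivariant sheaves $\mathcal{L}_i$ and multiplicities $m_i \ge 1$.

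The strong Ext-vanishing and fullness axioms come essentially for free. Since every $\mathcal{L}_i$ is a direct summand of $\mathcal{T}$, the identity $\mathrm{Ext}^l_G(\mathcal{T},\mathcal{T})=0$ for $l\ne 0$ immediately restricts to $\mathrm{Hom}_G(\mathcal{L}_i,\mathcal{L}_j[l])=0$ for all $i,j$ and $l\ne 0$. Fullness follows because $\mathcal{T}$ generates $D^b_G(X)$ by Theorem 3.1, and the smallest thick subcategory containing $\mathcal{T}$ coincides with $\langle\mathcal{L}_1,\ldots,\mathcal{L}_n\rangle$ since thick subcategories are closed under direct summands. For exceptionality of each $\mathcal{L}_i$ one has $\mathrm{End}(\mathcal{L}_i)=H^0(X,\mathcal{O}_X)=k$, and a direct check shows that the induced $G$-action on $k$ fixes $\mathrm{id}_{\mathcal{L}_i}$, hence all scalars, so $\mathrm{End}_G(\mathcal{L}_i)=k$.

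The only non-formal step is to find an ordering of the $\mathcal{L}_i$ such that $\mathrm{Hom}_G(\mathcal{L}_i,\mathcal{L}_j)=0$ whenever $i>j$. I would introduce the directed Hom graph on $\{\mathcal{L}_1,\ldots,\mathcal{L}_n\}$ with an edge $\mathcal{L}_i\to\mathcal{L}_j$ whenever $\mathrm{Hom}_G(\mathcal{L}_i,\mathcal{L}_j)\neq 0$, and show that this graph has no directed cycle of length $\ge 2$; once acyclicity is established, any topological sort provides the desired labeling, and the Ext-vanishing established above takes care of the shifts $l\neq 0$. The heart of the argument is the following composition trick: given a purported cycle $f_1\colon\mathcal{L}_{i_1}\to\mathcal{L}_{i_2},\ldots,f_k\colon\mathcal{L}_{i_k}\to\mathcal{L}_{i_1}$ of nonzero equivariant morphisms between pairwise non-isomorphic invertible sheaves, each $f_a$ is a nonzero section of a line bundle on the integral scheme $X$ and so is nonvanishing at the generic point; hence the composition $f_k\circ\cdots\circ f_1\in\mathrm{End}_G(\mathcal{L}_{i_1})=k$ is a nonzero scalar. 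This forces $f_1$ to admit a left inverse, and a morphism of invertible sheaves with a left inverse must be nowhere vanishing, hence an isomorphism of equivariant sheaves, contradicting $\mathcal{L}_{i_1}\not\cong\mathcal{L}_{i_2}$.

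The main obstacle is this acyclicity lemma; everything else reduces to unpacking the definitions together with the two standard facts that thick subcategories detect direct summands and that $H^0(X,\mathcal{O}_X)=k$ on the smooth projective (connected) scheme $X$.
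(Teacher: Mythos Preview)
Your proof is correct and follows essentially the same approach as the paper's: both decompose $\mathcal{T}$ via Krull--Schmidt, read off the higher Ext-vanishing and fullness directly from the tilting hypothesis, note $\mathrm{End}_G(\mathcal{L}_i)=k$, and then use a composition argument to produce the ordering. The only real difference is in the level of detail at the ordering step. The paper argues only the length-two case---observing that for each pair $i\neq j$ at most one of $\mathrm{Hom}_G(\mathcal{L}_i,\mathcal{L}_j)$, $\mathrm{Hom}_G(\mathcal{L}_j,\mathcal{L}_i)$ is nonzero---and then simply asserts that an ordering exists. Your version is more careful: you phrase the problem as acyclicity of the Hom-graph and handle cycles of arbitrary length via the generic-point argument, which is precisely what one needs to justify the existence of a topological sort. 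So your argument is the same in spirit but fills a small gap that the paper leaves implicit.
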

\begin{proof}
According to the Krull--Schmidt Theorem for coherent sheaves \cite{AT} we write $\mathcal{T}=\bigoplus^n_{i=1}\mathcal{L}^{\oplus r_i}_i$, where $\mathcal{L}_i$ are the indecpomposable pairwise non-isomorphic direct summands. As $\langle \mathcal{T}\rangle =D^b_G(X)$, we clearly have $\langle \mathcal{E}_i\rangle_{1\leq i\leq n}=D^b_G(X)$. So the summands form a full collection. From the fact that $\mathrm{Ext}^l_G(\mathcal{T},\mathcal{T})=0$ for $l>0$, we deduce that $\mathrm{Ext}^l_G(\mathcal{L}_i,\mathcal{L}_j)=0$ for any $1\leq i,j\leq n$ and $l>0$. Note that for each $\mathcal{L}_i$ we have $\mathrm{Hom}_G(\mathcal{L}_i,\mathcal{L}_i)=k$. So, according to Definition 3.7, we only need to see that we can order the summands $\mathcal{L}_i$ in such a way that $\mathrm{Hom}_G(\mathcal{L}_r,\mathcal{L}_s)=0$ for any $1\leq s<r\leq n$. Since $\mathrm{Hom}_G(\mathcal{L}_i,\mathcal{L}_i)=k$, then for any $1\leq i,j\leq n$ either $\mathrm{Hom}_G(\mathcal{L}_j,\mathcal{L}_i)=0$ or $\mathrm{Hom}_G(\mathcal{L}_i,\mathcal{L}_j)=0$. Thus, the set $\mathcal{L}_1,...,\mathcal{L}_n$ can be ordered in such a way that the reordered set becomes a full strongly exceptional collection. 
\end{proof}
\begin{prop}
Let $X$ be a smooth projective $k$-scheme on which a finite group $G$ acts. Suppose that $\mathcal{E}_1,...,\mathcal{E}_n\in \mathrm{Coh}_G(X)$, considered as a set of objects in $D^b(X)$, is a full strongly exceptional collection for $D^b(X)$. If we denote by $W_1,...,W_m$ the irreducible representation of $G$, then the collection
\begin{eqnarray}
\{\{\mathcal{E}_i\otimes W_1\}_{1\leq i\leq n}, \{\mathcal{E}_i\otimes W_2\}_{1\leq i\leq n},..., \{\mathcal{E}_i\otimes W_m\}_{1\leq i\leq n}\}
\end{eqnarray} is a full strongly exceptional collection for $D^b_G(X)$.
\end{prop}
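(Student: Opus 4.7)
My plan is to verify the three conditions of a full strongly exceptional collection directly, reducing each equivariant statement to its non-equivariant counterpart combined with Schur's lemma. The central computation is that, by Lemma 2.2 together with the natural $G$-equivariant identification
\[\mathrm{Hom}(\mathcal{E}_i\otimes W_j,\mathcal{E}_{i'}\otimes W_{j'}[l])\simeq\mathrm{Hom}(\mathcal{E}_i,\mathcal{E}_{i'}[l])\otimes_k \mathrm{Hom}_k(W_j,W_{j'}),\]
taking $G$-invariants yields
\[\mathrm{Hom}_G(\mathcal{E}_i\otimes W_j,\mathcal{E}_{i'}\otimes W_{j'}[l])\simeq \bigl(\mathrm{Hom}(\mathcal{E}_i,\mathcal{E}_{i'}[l])\otimes \mathrm{Hom}_k(W_j,W_{j'})\bigr)^G.\]
From this formula the various vanishing conditions essentially write themselves down.

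First, the strongness condition (vanishing for $l\neq 0$) is immediate: the tensor factor $\mathrm{Hom}(\mathcal{E}_i,\mathcal{E}_{i'}[l])$ already vanishes by strong exceptionality of $\{\mathcal{E}_i\}$ in $D^b(X)$, killing the whole expression regardless of $j,j'$. Exceptionality of each individual $\mathcal{E}_i\otimes W_j$ reduces, via $\mathrm{End}(\mathcal{E}_i)=k$ with its trivial $G$-action (the identity morphism is always $G$-invariant by the very definition of the induced action on Hom spaces), to $\mathrm{End}_G(W_j)=k$, which is Schur's lemma. For the semi-orthogonality I would order the family lexicographically by letting $i$ vary outer and $j$ inner: when $i>i'$ the factor $\mathrm{Hom}(\mathcal{E}_i,\mathcal{E}_{i'})=0$ annihilates everything, and when $i=i'$ with $j\neq j'$ the formula collapses to $\mathrm{Hom}_G(W_j,W_{j'})=0$, again by Schur.

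For fullness I would invoke Theorem 4.4 applied to $\mathcal{T}=\bigoplus_{i=1}^n\mathcal{E}_i$, which is a tilting bundle on $X$ because the direct sum of a full strongly exceptional collection always is, and which is equivariant by the hypothesis that each $\mathcal{E}_i\in\mathrm{Coh}_G(X)$. The theorem then produces $\mathcal{T}\otimes k[G]\simeq \bigoplus_{i,j}(\mathcal{E}_i\otimes W_j)^{\oplus\dim W_j}$ as a tilting bundle on $[X/G]$, so in particular $\langle \mathcal{E}_i\otimes W_j\rangle_{i,j}=D^b_G(X)$ by Theorem 3.1, yielding fullness.

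The main technical obstacle is confirming the naturality and $G$-equivariance of the Hom-tensor isomorphism and the triviality of the $G$-action on $\mathrm{End}(\mathcal{E}_i)=k\cdot\mathrm{id}$; once those mechanical checks are in place, the entire argument is a formal consequence of strong exceptionality in $D^b(X)$ and Schur's lemma for the finite-group representations $W_j$.
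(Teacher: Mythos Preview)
Your approach is essentially identical to the paper's: the same $G$-equivariant Hom-tensor identification, Lemma~2.2 to pass to invariants, Schur's lemma for the representation factor, and Theorem~4.4 applied to $\mathcal{T}=\bigoplus_i\mathcal{E}_i$ (the paper uses the summand $\bigoplus_j\mathcal{T}\otimes W_j$ rather than $\mathcal{T}\otimes k[G]$, but this is immaterial) together with Theorem~3.1 for fullness.

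One point is worth flagging. You order the collection with $i$ outer and $j$ inner, whereas the displayed collection~(2) in the statement has $j$ outer and $i$ inner. Your ordering is the one that actually works: when $i>i'$ the factor $\mathrm{Hom}(\mathcal{E}_i,\mathcal{E}_{i'})$ vanishes, and when $i=i'$, $j\neq j'$ the factor $\mathrm{End}(\mathcal{E}_i)=k$ is the trivial representation so taking invariants reduces to $\mathrm{Hom}_G(W_j,W_{j'})=0$. The ordering literally written in~(2), by contrast, can fail: for $j>j'$ one needs $\bigl(\mathrm{Hom}(\mathcal{E}_i,\mathcal{E}_{i'})\otimes\mathrm{Hom}_k(W_j,W_{j'})\bigr)^G=0$, but $\mathrm{Hom}(\mathcal{E}_i,\mathcal{E}_{i'})$ need not be a trivial $G$-module. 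For instance, take $G=\mathbb{Z}/2$ acting on $\mathbb{P}^1$ by $[x:y]\mapsto[y:x]$ with $\mathcal{E}_1=\mathcal{O}$, $\mathcal{E}_2=\mathcal{O}(1)$, $W_1$ trivial, $W_2$ the sign character; then $\mathrm{Hom}_G(\mathcal{E}_1\otimes W_2,\mathcal{E}_2\otimes W_1)\simeq\bigl(H^0(\mathcal{O}(1))\otimes W_2\bigr)^G\neq 0$, so the order in~(2) is not exceptional. The paper's proof glosses over this by saying ``it easily follows that~(2) is a strongly exceptional collection''; your proof in fact establishes the correct ordering, so you have proved the intended result even if not the statement exactly as displayed.
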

\begin{proof}
We have canonical isomorphisms
\begin{center}
$\mathrm{Ext}^l(\mathcal{E}_i\otimes W_p,\mathcal{E}_j\otimes W_q)\simeq \mathrm{Ext}^l(\mathcal{E}_i,\mathcal{E}_j)\otimes \mathrm{Hom}(W_p,W_q)$
\end{center} on $X$. Lemma 2.2 gives 
\begin{center}
$\mathrm{Ext}^l_G(\mathcal{E}_i\otimes W_p,\mathcal{E}_j\otimes W_q)\simeq (\mathrm{Ext}^l(\mathcal{E}_i,\mathcal{E}_j)\otimes \mathrm{Hom}(W_p,W_q))^G$.
\end{center} From Schur's Lemma and the fact that $\mathcal{E}_1,...,\mathcal{E}_n$, considered as a set of objects in $D^b(X)$, is a strongly exceptional collection for $D^b(X)$, it easily follows that (2) is a strongly exceptional collection in $D^b_G(X)$. The generating property of this collection can be seen as follows:
As $\mathcal{T}=\bigoplus^n_{i=1}\mathcal{E}_i\in D^b_G(X)$, considered as an object in $D^b(X)$, is a tilting bundle on $X$, we conclude from Theorem 4.4 that $\bigoplus^m_{i=1}\mathcal{T}\otimes W_i$ is a tilting bundle on $[X/G]$. In particular, $\bigoplus^m_{i=1}\mathcal{T}\otimes W_i$ generates $D_G(\mathrm{Qcoh}(X)$. Note that the compact objects of $D_G(\mathrm{Qcoh}(X)$ are all of $D^b_G(X)$ (see \cite{BR}, p.39). Since $\bigoplus^m_{i=1}\mathcal{T}\otimes W_i$ is compact, Theorem 3.1 yields $\langle \bigoplus^m_{i=1}\mathcal{T}\otimes W_i\rangle=D^b_G(X)$. Thus our collection (2) is full. 
\end{proof}
We are now able to prove the following:
\begin{thm}
Let $\pi\colon X\rightarrow Z$ be a flat $G$-map and $\mathcal{E}_1,...,\mathcal{E}_n$ a set of invertible sheaves in $D^b_G(X)$ such that, considered as a set of objects in $D^b(X)$, for any point $z\in Z$ the collection $\mathcal{E}^z_1=\mathcal{E}_1\otimes \mathcal{O}_{X_z},...,\mathcal{E}^z_n=\mathcal{E}_n\otimes \mathcal{O}_{X_z}$ of the restrictions to the fiber $\pi^{-1}(z)=X_z$ is a full strongly exceptional collection for $D^b(X_z)$. Suppose $\mathcal{T}\in \mathrm{Coh}_G(Z)$ is a tilting bundle on $Z$ whose indecomposable pairwise non-isomorphic direct summands are invertible sheaves. Then there is a full strongly exceptional collection for $D^b_G(X)$.
\end{thm}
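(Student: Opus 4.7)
The plan is to chain together Theorem 4.5, Proposition 4.7, and Proposition 4.8. First, I would invoke Theorem 4.5 to produce an equivariant ample sheaf $\mathcal{M}$ on $Z$ such that $\mathcal{R}:=\bigoplus_{i=1}^{n}\pi^{*}(\mathcal{T}\otimes\mathcal{M}^{\otimes i})\otimes\mathcal{E}_{i}$ is a tilting bundle on $X$ (this is exactly the statement that the proof of Theorem 4.5 establishes for $\mathcal{R}$ itself, before tensoring with $k[G]$). Moreover, $\mathcal{R}\in\mathrm{Coh}_{G}(X)$, since $\mathcal{T}$ and $\mathcal{M}$ lie in $\mathrm{Coh}_{G}(Z)$, $\pi$ is a $G$-morphism, and each $\mathcal{E}_{i}$ lies in $\mathrm{Coh}_{G}(X)$.

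Next, by Krull--Schmidt I would write $\mathcal{T}=\bigoplus_{j=1}^{s}\mathcal{L}_{j}^{\oplus r_{j}}$ where $\mathcal{L}_{1},\ldots,\mathcal{L}_{s}$ are the pairwise non-isomorphic indecomposable invertible summands of $\mathcal{T}$. Then the indecomposable direct summands of $\mathcal{R}$ are precisely (the distinct representatives among) the sheaves $\pi^{*}(\mathcal{L}_{j}\otimes\mathcal{M}^{\otimes i})\otimes\mathcal{E}_{i}$, each of which is invertible as a tensor product of invertible sheaves, and each of which is equivariant by the same reasoning as above. I can therefore apply Proposition 4.7 to $\mathcal{R}$, taking the acting group there to be trivial, so that the hypothesis reduces to ``tilting bundle on $X$ with invertible indecomposable summands.'' The output is an ordering $\mathcal{F}_{1},\ldots,\mathcal{F}_{N}$ of those invertible summands forming a full strongly exceptional collection in $D^{b}(X)$.

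Finally, since every $\mathcal{F}_{k}$ already carries an equivariant structure coming from the construction, I can feed $\{\mathcal{F}_{k}\}_{1\le k\le N}$ into Proposition 4.8 to conclude that $\{\mathcal{F}_{k}\otimes W_{l}\}_{1\le k\le N,\,1\le l\le m}$, with $W_{1},\ldots,W_{m}$ the irreducible representations of $G$, is a full strongly exceptional collection in $D^{b}_{G}(X)$. The only step that requires care, and the one I would expect to be the main obstacle, is the transition between Theorem 4.5 and Proposition 4.7: I need the indecomposable summands of $\mathcal{R}$ as a non-equivariant sheaf to coincide (up to isomorphism and multiplicity) with the explicit invertible sheaves $\pi^{*}(\mathcal{L}_{j}\otimes\mathcal{M}^{\otimes i})\otimes\mathcal{E}_{i}$, and I need each to inherit a canonical equivariant structure so that Proposition 4.8 applies. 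Both of these are immediate from the construction, but they are the only points that are not purely a citation of an earlier result.
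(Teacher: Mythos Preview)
Your argument is correct and follows essentially the same route as the paper. The only cosmetic difference is the order in which you invoke Proposition~4.7 and the lifting step: the paper first reorders the invertible summands $\mathcal{L}_1,\ldots,\mathcal{L}_m$ of $\mathcal{T}$ on $Z$ via Proposition~4.7, then cites \cite{CDR}, Theorem~2.8 (observing, exactly as in the proof of Theorem~4.5, that the ample sheaf $\mathcal{M}$ may be taken equivariant) to obtain directly the ordered full strongly exceptional collection $\{\pi^{*}(\mathcal{L}'_i\otimes\mathcal{M}^{\otimes j})\otimes\mathcal{E}_j\}$ on $X$, and finishes with Proposition~4.8; you instead first build the tilting bundle $\mathcal{R}$ on $X$ via Theorem~4.5 and only then reorder its invertible summands on $X$ using Proposition~4.7 (with trivial group), before applying Proposition~4.8. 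Both arguments produce the same equivariant invertible sheaves and share the same implicit step you flagged, namely that each non-equivariant summand $\mathcal{L}_j$ of $\mathcal{T}$ inherits an equivariant structure; the paper treats this as given in exactly the way you do.
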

\begin{proof}
By assumption $\mathcal{T}$ is a tilting bundle on $Z$ whose indecomposable pairwise non-isomorphic direct summands $\mathcal{L}_1,...,\mathcal{L}_m$ are invertible sheaves. Proposition 4.7 ensures that the set $\mathcal{L}_1,...,\mathcal{L}_m$ can be reordered in such a way that the reordered set forms a full strongly exceptional collection. Let us denote this full strongly exceptional collection by $\mathcal{L}'_1,...,\mathcal{L}'_m$. Now \cite{CDR}, Theorem 2.8 states that there is an ample sheaf $\mathcal{M}$ on $Z$ such that 
\begin{eqnarray}
\{\{\pi^*(\mathcal{L}'_i\otimes \mathcal{M})\otimes \mathcal{E}_1  \}_{1\leq i\leq m},...,\{\pi^*(\mathcal{L}'_i\otimes \mathcal{M}^{\otimes n})\otimes \mathcal{E}_n  \}_{1\leq i\leq m}\} 
\end{eqnarray}
is a full strongly exceptional collection in $D^b(X)$. As the proof of this fact follows exactly the lines of the proof of Theorem 4.5, we see that the ample sheaf $\mathcal{M}$ on $Z$ can be chosen to be equivariant. In this way we get that all members of the collection (3) are objects in $\mathrm{Coh}_G(X)$ and Proposition 4.8 provides us with a full strongly exceptional collection for $D^b_G(X)$.
\end{proof}
\begin{rema}
Theorem 4.9 in connection with Example 3.9 also gives a semiorthogonal decomposition for $D^b_G(X)$.
\end{rema}

{\small MATHEMATISCHES INSTITUT, HEINRICH--HEINE--UNIVERSIT\"AT 40225 D\"USSELDORF, GERMANY}\\
E-mail adress: novakovic@math.uni-duesseldorf.de

\end{document}